\newtheorem{definition}{Definition}
\newtheorem{theorem}{Theorem}
\newtheorem{corollary}{Corollary}
\newtheorem{lemma}{Lemma}
\newtheorem{proposition}{Proposition}
\newenvironment{proof}[1][Proof]{\textbf{#1.} }{\ \rule{0.5em}{0.5em}}
\date{}
\long\def\symbolfootnote[#1]#2{\begingroup%
\def\thefootnote{$\;$}\footnote[#1]{$^*$#2}\endgroup}
\begin{document}

\title{On Kuratowski partitions}
\author{Ryszard Frankiewicz and Joanna Jureczko}

\maketitle

\begin{abstract}
	In 1935 K. Kuratowski in \cite{KK} posed the problem whether a function $f \colon X \to Y$, ($X$ is completely metrizable and $Y$ is metrizable), with the property that a preimage of each open has the Baire property, is continuous apart from a meager set.
	This paper is a selection of older results related to the question posed by Kuratowski in 1935, coming from among others Solovay and Bukovsk\'y, and quite new ones concerning considerations in Ellentuck topology and some properties of K-ideals, (i.e. ideals associated with Kuratowski partitions).	
\end{abstract}

\section{Introduction and some previous results}

A function $ f \colon X \to Y$ between topological spaces has the Baire property iff for each open set $V \subset Y$ the preimage $f^{-1}(V)$ has the Baire property in $X$ (i.e., is open modulo a meager set in $X$). According to a well-known theorem attributed to K. Kuratowski, if a function $ f \colon X \to Y$ from a metrizable space $X$ to a separable metrizable space $Y$ has the Baire property, then for some meager set $M \subset X$ the restriction
$f$ to $X \setminus M$ is continuous.

In 1935 K. Kuratowski \cite{KK} raised a question whether the assumption of separability of $Y$ is essential. The question makes sense if we assume that $X$ fulfills Baire theorem.
Thus the Kuratowski's question proved to be equivalent to the existence of a partition of a space into meager subsets such that the union of each its subfamily has the Baire property. 

\begin{definition}
	Let $(X, \tau)$ be a topological space and let $\mathcal{F}$ be a partition of $X$ into meager sets. We say that $\mathcal{F}$ is \textit{a Kuratowski partition} if $\bigcup \mathcal{F}'$ has the Baire property for any subfamily $\mathcal{F}' \subset \mathcal{F}$.
\end{definition}

In \cite{S} Solovay considered partitions of the interval $[0,1]$ into meager sets and using metamathematical methods showed that the union of suitable sets of the partition is non-measurable in the sense of category.
He showed that one cannot prove in the Zermelo-Fraenkel set theory ZF (which does not contain the axiom of choice) the existence of a set of reals which is not Lebesgue measurable, more precisely he proved the following theorem.

\begin{theorem}[\cite{S}]
	ZF is consistent with the conjunction of the following statements:
	\\(1) The principle of dependent choices;
	\\(2) every set A of reals is Lebesgue measurable;
	\\(3) every set A of reals has the property of Baire; \\(4) every uncountable set A of reals contains a perfect subset;
	\\(5) let $A = \{A_x: x \in R\}$ be an indexed family of non-empty sets of reals with the reals as the index set, then there are Borel functions $h_1$ and $h_2$ mapping $R$ into $R$ such that
	\\(a) $\{x:h_1(x) \not \in A_x\}$ has Lebesgue measure zero,
	\\(b) $\{x:h_2(x) \not \in A_x\}$ is of first category.
\end{theorem}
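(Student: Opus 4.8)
The plan is to reconstruct Solovay's forcing argument, so I would begin by strengthening the ambient hypotheses: assume in the ground model $V \models \mathrm{ZFC}$ that there is a strongly inaccessible cardinal $\kappa$. (This extra assumption is genuinely needed for the measurability clause, as Shelah later showed, though not for the Baire-property clause alone.) First I would force with the Levy collapse $P = \mathrm{Coll}(\omega, <\kappa)$, the set of finite partial functions $p$ with $\mathrm{dom}(p) \subseteq \kappa \times \omega$ and $p(\alpha, n) \in \alpha$, ordered by reverse inclusion. Since $\kappa$ is inaccessible, $P$ has the $\kappa$-c.c.\ and every ordinal below $\kappa$ is collapsed, so in the generic extension $V[G]$ we have $\kappa = \omega_1$. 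The model witnessing the theorem is then the inner model $N = L(\mathbb{R}^{V[G]})$, the constructible closure of the reals of $V[G]$; this $N$ satisfies $\mathrm{ZF} + \mathrm{DC}$, which gives clause (1) essentially for free, and its sets of reals are exactly those of $V[G]$.

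Next I would isolate the two structural features of the collapse on which everything rests. By a factoring/absorption analysis, $P$ is, below any condition and after any initial segment, isomorphic to a product of a small collapse and a full copy of $\mathrm{Coll}(\omega, <\kappa)$; combined with the $\kappa$-c.c.\ this yields: (i) every real $r \in V[G]$ already lies in some intermediate extension $V[G_\alpha]$ with $\alpha < \kappa$, where $G_\alpha$ is the restriction of $G$ to coordinates below $\alpha$; (ii) because $\kappa$ is inaccessible, $V[G_\alpha]$ has fewer than $\kappa$ reals, hence only countably many reals from the viewpoint of $V[G]$; and (iii) $V[G]$ is itself a Levy-collapse extension of each $V[G_\alpha]$, and the collapse is weakly homogeneous, so the truth of a formula with parameters in $V[G_\alpha]$ is controlled by the Boolean value computed in $V[G_\alpha]$.

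With these in hand, the heart of the proof is Solovay's lemma: every set of reals $A \in N$ is definable in $V[G]$ from ordinal parameters and a single real $r$, say $A = \{x : V[G] \models \varphi(x,r)\}$, and for such a set one shows that $A$ agrees, modulo a null set, with a Borel set coded in $V[r]$. The mechanism is the interaction of the collapse with random real forcing: for $s$ random over $V[r]$ one has $s \in A$ iff the Borel code evaluates positively, and the set of reals random over $V[r]$ is co-null precisely because $V[r]$ (equivalently $V[G_\alpha]$) has only countably many reals. This gives Lebesgue measurability of every $A$, i.e.\ clause (2); running the identical argument with Cohen real forcing in place of random forcing and ``meager'' in place of ``null'' yields the Baire property, clause (3). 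For the perfect set property, clause (4), I would argue that if $A$ is uncountable then by (ii) it is not contained in the reals of any single $V[G_\alpha]$, so $A$ contains a real generic over such a submodel; a fusion/tree construction then embeds a perfect set into $A$. Clause (5) follows from the same measurability and Baire machinery applied to the relation $R = \{(x,y) : y \in A_x\}$: one extracts Borel-coded selectors $h_1, h_2$ that work off a null set and off a meager set respectively.

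The main obstacle, and the technical core of the whole argument, is establishing Solovay's lemma together with the homogeneity and factoring properties of the Levy collapse in precise form---in particular verifying that the set of reals random (respectively Cohen) over the relevant intermediate model is co-null (respectively comeager), which is exactly the point where the inaccessibility of $\kappa$ is indispensable, since it is what forces each $V[G_\alpha]$ to contribute only countably many reals. Once the definability-to-Borel reduction is secured, clauses (2)--(5) are parallel and comparatively routine, and the verification that $N = L(\mathbb{R})$ models $\mathrm{ZF}+\mathrm{DC}$ is standard.
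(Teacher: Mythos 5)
The paper states this theorem without proof, simply quoting it from Solovay's 1970 paper [S], so there is no internal argument to compare against; your sketch is a faithful outline of Solovay's own proof (Levy collapse of an inaccessible, passage to an inner model of definable-from-a-real sets, the definability-to-Borel-code reduction via random and Cohen reals over small intermediate models, and the parallel treatment of clauses (2)--(5)). One sentence needs correcting: the sets of reals of $N=L(\mathbb{R}^{V[G]})$ are \emph{not} exactly those of $V[G]$ --- only the reals themselves coincide, while the sets of reals of $N$ are precisely the ones definable from a real and ordinal parameters, which is what your later ``Solovay's lemma'' paragraph correctly uses (and is essential, since $V[G]\models \mathrm{AC}$ and so has non-measurable sets).
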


In 1979 Bukovsky in \cite{B} using models of set theory, namely a generic ultrapower, proved the following theorem, firstly proved by Solovay (unplished result). The proof of Solovay's result was longer and more complicated than Bukovsk\'y's.	

\begin{theorem}[\cite{B}]
	Let $\{A_\xi \colon \xi \in S\}$ be a partition of the unit $[0,1]$ into pairwise disjoint sets of Lebesgue measure zero. Then there exists a set $S_0 \subseteq S$ such that the union $\bigcup_{\xi\in S_0} A_\xi$ is not Lebesgue measurable.
\end{theorem}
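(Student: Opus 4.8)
The plan is to argue by contradiction: assume that $\bigcup_{\xi \in S_0} A_\xi$ is Lebesgue measurable for every $S_0 \subseteq S$, and show that this forces $S$ to carry a pathological measure that a set of size at most $\mathfrak{c}$ cannot support. First I would encode the partition by its index map $g \colon [0,1] \to S$, where $g(x) = \xi$ precisely when $x \in A_\xi$; the contradiction hypothesis says exactly that every preimage $g^{-1}(S_0) = \bigcup_{\xi \in S_0} A_\xi$ is measurable. Writing $\lambda$ for Lebesgue measure, I then define a set function on the full power set of $S$ by $\mu(S_0) = \lambda\bigl(g^{-1}(S_0)\bigr)$.

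The next step is to check that $\mu$ is a diffused, countably additive probability measure. Total mass $1$ is immediate from $g^{-1}(S) = [0,1]$. Since the $A_\xi$ are pairwise disjoint, preimages of disjoint index sets are disjoint, and the identity $g^{-1}\bigl(\bigcup_n S_n\bigr) = \bigcup_n g^{-1}(S_n)$ together with the countable additivity of $\lambda$ yields countable additivity of $\mu$. Finally $\mu(\{\xi\}) = \lambda(A_\xi) = 0$, so $\mu$ vanishes on singletons. Since the $A_\xi$ are nonempty and disjoint subsets of $[0,1]$, we also have $|S| \le \mathfrak{c}$.

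At this stage the purely measure-theoretic work is done: I have manufactured a countably additive probability measure, vanishing on singletons, defined on \emph{every} subset of a set of cardinality at most $\mathfrak{c}$; equivalently, I have made $|S|$ real-valued measurable. The genuinely hard part -- and the whole point of the theorem -- is to turn this into an impossibility. When $|S| = \aleph_1$, and in particular under CH where $\mathfrak{c} = \aleph_1$, an Ulam matrix decomposition of $S$ shows directly, in ZFC, that no such $\mu$ can exist, and the contradiction is complete; the same argument eliminates every $|S|$ lying below the first weakly inaccessible cardinal.

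To handle the remaining possibility -- that $\mathfrak{c}$ is itself atomlessly real-valued measurable -- is where Bukovsk\'y's generic ultrapower enters. Forcing with the measure algebra $\mathrm{Borel}([0,1])/\mathrm{null}$ adjoins a random real $r$, and, because every $g^{-1}(S_0)$ is measurable, the collection $U = \{\,S_0 \subseteq S : r \in g^{-1}(S_0)\,\}$ is a $V$-generic, nonprincipal, countably complete ultrafilter on $P(S)^V$ (nonprincipal since each $A_\xi$ is null, countably complete since $r$ lies pointwise in every $g^{-1}(S_n)$ and the null ideal is a $\sigma$-ideal). Forming the generic ultrapower $j \colon V \to M = \mathrm{Ult}(V,U)$, the class $[g]_U$ is an element of $j(S)$ distinct from every $j(\xi)$, so $j$ has critical point at most $\mathfrak{c}$, and reading this embedding back into $V$ is precisely the real-valued measurability of $\mathfrak{c}$. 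The remaining, and main, obstacle is exactly the calibration of this embedding against the set-theoretic universe: the conclusion follows as soon as $\mathfrak{c}$ fails to be real-valued measurable, and it is this metamathematical content -- inaccessible to a naive ZFC computation -- that Solovay's original and Bukovsk\'y's ultrapower arguments are designed to supply.
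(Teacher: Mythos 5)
The paper does not reproduce Bukovsk\'y's proof --- it only cites the result and notes that the method is a generic ultrapower --- so your attempt can only be measured against what the theorem actually requires, and there it falls short. Your reduction is correct as far as it goes: the induced set function $\mu(S_0)=\lambda(g^{-1}(S_0))$ is a countably additive probability measure on all of $P(S)$ vanishing on singletons, $|S|\le\mathfrak{c}$, and Ulam's argument (after passing to the least cardinal $\kappa$ carrying such a measure, so that $\mu$ becomes $\kappa$-additive, and after discarding the atomic case, which would yield a measurable cardinal $\le\mathfrak{c}$) disposes of every $\kappa$ below the first weakly inaccessible. But the theorem is an unconditional ZFC statement, and it is consistent (relative to a measurable) that $\mathfrak{c}$ itself is real-valued measurable; in that situation your argument produces no contradiction at all. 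Your final paragraph sets up the random real $r$, the $V$-ultrafilter $U=\{S_0 : r\in g^{-1}(S_0)\}$ and the embedding $j\colon V\to\mathrm{Ult}(V,U)$, and then explicitly declares that ``the conclusion follows as soon as $\mathfrak{c}$ fails to be real-valued measurable'' and that the rest is what Solovay and Bukovsk\'y ``are designed to supply.'' That is precisely the step you were asked to supply: without it you have proved only ``either some union is non-measurable or some $\kappa\le\mathfrak{c}$ is real-valued measurable,'' which is strictly weaker than the theorem.

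The missing idea is that $\mu$ is not an arbitrary real-valued measure: the map $S_0\mapsto g^{-1}(S_0)$ is a measure-preserving $\sigma$-embedding of $P(S)/I_\mu$ into the Lebesgue measure algebra $LM/\triangle$, so the quotient has countable Maharam type. The entire content of the theorem is that a $\kappa$-complete ideal on $\kappa$ cannot have such a separable quotient --- this is exactly the statement recorded as Theorem 7 of the present paper ($P(\kappa)/I_\kappa$ is not isomorphic to $LM/\triangle$, the Gitik--Shelah phenomenon), and the generic ultrapower is the tool by which Bukovsk\'y derives the contradiction: one checks that the ultrapower by $U$ is well-founded (the null ideal of $\mu$ is $\aleph_1$-saturated, hence precipitous) and then plays the countable generation of the forcing against $\kappa$-completeness of $I_\mu$ to reach an absurdity. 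You stop exactly at the point where this analysis would have to begin, so the proposal as written is an honest reduction plus a statement of intent, not a proof. Minor further issues: $[g]_U$ is not an element of the ultrapower by an ultrafilter on $S$ (you want $[\mathrm{id}_S]_U$), and the atomic case of $\mu$ should be ruled out explicitly before invoking real-valued measurability.
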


In \cite{EFK} the authors considered partitions of \v Cech complete space.
We remind that a space $X$ is \textit{\v Cech complete} if $X$ is a dense $G_\delta$ subset of a compact space.
Among others they proved the following results.

\begin{theorem}[\cite{EFK}]
	Let $X$ be a \v Cech complete space and has a pseudobase of cardinality $\leq 2^\omega$.
	Let $\mathcal{F}$ be a partition of $X$ into meager sets. Then there exists a family $\mathcal{A} \subset \mathcal{F}$ such that $\bigcup \mathcal{A}$ has not the Baire property.
\end{theorem}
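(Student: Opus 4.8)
The plan is to carry Bukovsk\'y's generic--ultrapower method (Theorem 2) over from the measure algebra to the category algebra, using the pseudobase to keep the forcing of size $\le 2^{\omega}$ and \v Cech completeness to turn the generic filter into an honest point of $X$. Write $\mathbb{B}=\mathrm{Borel}(X)/\mathrm{meager}\cong \mathrm{RO}(X)$ for the category algebra. Since every block of a partition into meager sets is meager, $X$ has no isolated point (an isolated point would be a non-meager open singleton sitting inside some block), so $\mathbb{B}$ is atomless; and $X$, being \v Cech complete, is a Baire space, so $\mathbb{B}$ is nontrivial. The pseudobase $\mathcal{P}$ is dense in $\mathbb{B}\setminus\{0\}$, hence the poset $\mathbb{P}=(\mathcal{P},\subseteq)$, of size $\le 2^{\omega}$, is forcing equivalent to $\mathbb{B}$.

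Assume toward a contradiction that $\mathcal{F}=\{F_{\xi}:\xi\in S\}$ is a Kuratowski partition. Then
$$\Phi(\mathcal{A})=\bigl[\,\textstyle\bigcup\mathcal{A}\,\bigr],\qquad \mathcal{A}\subseteq\mathcal{F},$$
is a well defined $\sigma$-complete Boolean homomorphism $\Phi\colon \mathcal{P}(\mathcal{F})\to\mathbb{B}$: pairwise disjointness of the blocks gives multiplicativity, $\bigcup\mathcal{F}=X$ gives $\Phi(\mathcal{F})=1$, complements are respected, and countable joins are preserved because $\bigcup$ preserves them. Moreover $\Phi(\{F_{\xi}\})=[F_{\xi}]=0$, so $\Phi$ annihilates every countable subfamily.

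Now force with $\mathbb{P}$ and let $G$ be generic. Here \v Cech completeness does the essential work: a complete sequence of open covers lets the decreasing chain of generic conditions shrink inside the covers, so the intersection of the closures of the conditions is a single point $\dot{x}\in X$ of $V[G]$. For a set $A$ with the Baire property, $A=V\triangle M$ with $V$ open and $M$ meager, the closed nowhere dense pieces of $M$ are generically avoided, so $\dot{x}\in A$ if and only if $[A]\in G$; in particular $\dot{x}\in\bigcup\mathcal{A}$ iff $\Phi(\mathcal{A})\in G$. Set, in $V[G]$,
$$\mathcal{U}=\{\mathcal{A}\subseteq\mathcal{F}:\dot{x}\in\textstyle\bigcup\mathcal{A}\}=\Phi^{-1}(G).$$
As the preimage of an ultrafilter under $\Phi$ it is an ultrafilter on $\mathcal{F}$; it is nonprincipal because $\dot{x}$ lies in no ground model block; and it is $\sigma$-complete because membership of the \emph{point} $\dot{x}$ in a countable union is an honest existential quantifier: if $\dot{x}\notin\bigcup\mathcal{A}_{n}$ for all $n$, then $\dot{x}\notin\bigcup_{n}\bigcup\mathcal{A}_{n}$. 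Thus in $V[G]$ the set $\mathcal{F}$ carries a nonprincipal $\sigma$-complete ultrafilter, so there is a measurable cardinal $\mu\le|\mathcal{F}|$.

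The final step, and the real obstacle, is reflection. A measurable cardinal is inaccessible, hence $\mu>2^{\omega}\ge|\mathbb{P}|$, so by the L\'evy--Solovay theorem the small forcing $\mathbb{P}$ neither creates nor destroys measurability at $\mu$; therefore $\mu$ is already measurable in $V$. In other words, a Kuratowski partition of such an $X$ manufactures a measurable cardinal $\le|\mathcal{F}|$ (the combinatorial core being the Keisler--Tarski/Ulam fact that a $\sigma$-complete nonprincipal ultrafilter lives only on a measurable cardinal). Whenever no measurable cardinal is that small the assumption collapses and the desired $\mathcal{A}\subseteq\mathcal{F}$ with $\bigcup\mathcal{A}$ lacking the Baire property must exist; this is automatic, and the conclusion unconditional, as soon as $|X|\le 2^{\omega}$ (e.g.\ for separable $X$), since then $|\mathcal{F}|\le 2^{\omega}$ sits below every measurable. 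The two points that still need care are the precise extraction of the generic point from \v Cech completeness and the $\sigma$-completeness/reflection chain just sketched.
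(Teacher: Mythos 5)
The paper itself gives no proof of this statement --- it is quoted from \cite{EFK} --- so your attempt must be judged on its own. Your opening is sensible and close in spirit to how this circle of results is actually analysed (Bukovsk\'y's generic-point method, the $\sigma$-complete homomorphism $\Phi\colon\mathcal{P}(\mathcal{F})\to\mathbb{B}$, \v Cech completeness used to realize the generic filter as a point). But the endgame does not prove the stated theorem. All you extract from the assumed Kuratowski partition is the existence of a measurable cardinal $\mu\le|\mathcal{F}|$, and that is not a contradiction in ZFC: the theorem is unconditional and must hold, for instance, when $|\mathcal{F}|$ equals the first measurable cardinal or is larger. Your closing sentence concedes exactly this and quietly replaces the hypothesis ``pseudobase of cardinality $\le 2^{\omega}$'' by ``$|X|\le 2^{\omega}$'', which forces $|\mathcal{F}|\le 2^{\omega}$ and is a genuinely weaker statement. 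The whole content of the pseudobase hypothesis in \cite{EFK} is that it permits a localization/reduction of the partition so that the relevant index set has size $\le 2^{\omega}$; only after that reduction does Ulam's theorem (the first measurable cardinal is inaccessible, hence $>2^{\omega}$) turn your ultrafilter into a contradiction. That reduction is the missing idea. Without it your argument proves only the conditional claim ``if $|\mathcal{F}|$ is below the first measurable cardinal, then some $\bigcup\mathcal{A}$ fails the Baire property,'' and the Frankiewicz--Kunen equiconsistency theorem quoted later in this paper (\cite{FK}) shows that no argument of your shape can close unconditionally once the smallness hypothesis is dropped: for general complete metric spaces a Kuratowski partition is consistent relative to a measurable cardinal.

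There are also secondary, in principle repairable, gaps in the middle of the argument. The algebra $\mathbb{B}=\mathrm{RO}(X)$ need not be ccc under these hypotheses, so the equivalence ``$\dot x\in A$ iff $[A]\in G$'' must be set up for reinterpreted sets (working with $\Phi^{-1}(G)$ directly, as you parenthetically do, is the right fix), and the ultrafilter you obtain is a priori only countably complete with respect to ground-model sequences on the ground-model power set $\mathcal{P}(\mathcal{F})^{V}$. Converting that into an actual measurable cardinal in $V$ requires the generic-ultrapower or induced-ideal argument; the L\'evy--Solovay theorem as you invoke it presupposes an honest measurable cardinal in $V[G]$, which you have not yet produced.
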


\begin{corollary}[\cite{EFK}]
	If $\mathcal{F}$ is a partition of $R$ into sets of measure zero, then there exists a family $\mathcal{A} \subset \mathcal{F}$ such that $\bigcup \mathcal{A}$ has not the Baire property.
\end{corollary}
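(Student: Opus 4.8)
The plan is to deduce the corollary from the preceding theorem by treating $R$ with its usual topology as a \v Cech complete space: $R$ is a dense $G_\delta$ subset of the two-point compactification $[-\infty,+\infty]$ and has a countable base, hence a pseudobase of cardinality $\le 2^\omega$; the same holds for every open interval $I\subseteq R$, each homeomorphic to $R$. Thus the theorem applies to any partition of $R$, or of such an $I$, into \emph{meager} sets. The whole difficulty is that the hypothesis supplies pieces of measure zero, and a null set need be neither meager nor comeager, so the theorem cannot be quoted verbatim; the real task is to convert the given null partition into a meager one on a suitable subspace without destroying the Baire property of the relevant unions.

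I would argue by contradiction, assuming $\bigcup\mathcal A$ has the Baire property for every $\mathcal A\subseteq\mathcal F$. Then each piece $A_\xi$ has the Baire property, so $A_\xi=U_\xi\triangle P_\xi$ with $U_\xi$ open and $P_\xi$ meager, and $A_\xi$ is non-meager exactly when $U_\xi\neq\emptyset$. Since the pieces are disjoint, for $\xi\neq\eta$ the open set $U_\xi\cap U_\eta$ lies in the meager set $P_\xi\cup P_\eta$, hence is empty; the nonempty cores are therefore pairwise disjoint, and by separability of $R$ only countably many pieces are non-meager. Put $W=\bigcup_\xi U_\xi$. If $W$ is not dense, pick an open interval $I\subseteq R\setminus\overline{W}$: on $I$ every $U_\xi$ is absent, so each trace $A_\xi\cap I=P_\xi\cap I$ is meager in $I$, and $\{A_\xi\cap I: A_\xi\cap I\neq\emptyset\}$ partitions the space $I$ into meager sets. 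Applying the theorem to $I\cong R$ produces $S_0\subseteq S$ with $\bigl(\bigcup_{\xi\in S_0}A_\xi\bigr)\cap I$ lacking the Baire property in $I$; as the Baire property passes to open subspaces, this contradicts the assumed Baire property of $\bigcup_{\xi\in S_0}A_\xi$ in $R$.

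The main obstacle is the remaining case, in which the cores are dense, so $W$ is dense and $G=\bigcup\{A_\xi: U_\xi\neq\emptyset\}$ is a comeager null set whose complement $R\setminus G$ is meager (and of full measure). Here the category method loses all traction: each of the countably many non-meager pieces is comeager on its core yet null, behaving like a dense $G_\delta$ of measure zero, while every remaining piece lies inside the meager set $R\setminus G$. I expect this configuration to be the crux, since for it each union $\bigcup_{\xi\in S_0}A_\xi$ decomposes as a union of countably many sets already having the Baire property together with a subset of the meager set $R\setminus G$, so no choice of $S_0$ seems to drive the union outside the $\sigma$-algebra of Baire-property sets. Overcoming this — in effect, ruling out even a single comeager null piece — is exactly the delicate point at which the Baire-property formulation parts company with the measure formulation of Bukovsk\'y's theorem, and it is where I would concentrate the remaining work.
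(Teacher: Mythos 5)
Your first case is handled correctly: the ccc argument showing at most countably many non-meager pieces is sound, and if the union $W$ of the cores is not dense, restricting to an interval $I\subseteq R\setminus\overline{W}$ and invoking the \v Cech-complete theorem does produce the desired contradiction, since the Baire property is inherited by open subspaces. But the dense-core case, which you honestly flag as open, is not merely unfinished --- it is genuinely impossible to close, because in that case the literal statement fails. Concretely: let $A\subseteq R$ be a meager $F_\sigma$ set of full Lebesgue measure and $E=R\setminus A$ its complement, a comeager null $G_\delta$; put $\mathcal{F}=\{E\}\cup\bigl\{\{x\}\colon x\in A\bigr\}$. This is a partition of $R$ into sets of measure zero, yet the union of any subfamily is either a subset of the meager set $A$ (hence meager, hence with the Baire property) or $E$ together with such a subset (hence a Borel set modulo a meager set), so \emph{every} subfamily union has the Baire property in the Euclidean topology. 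Thus no refinement of your category argument on Euclidean $R$ can work; your diagnosis that this configuration is where category ``loses all traction'' is exactly right, and it shows the corollary cannot be a formal consequence of the preceding theorem applied to $R$ or its subintervals.

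The resolution is that the corollary, as intended in \cite{EFK} (the present survey states it in a garbled form), concludes that some union $\bigcup\mathcal{A}$ is \emph{not Lebesgue measurable} --- in parallel with Bukovsk\'y's theorem and the \cite{FGPR} result quoted in the same paper --- rather than that it lacks the Baire property for the usual topology. Equivalently, it is the Baire-property statement for the density topology on $R$, in which sets with the Baire property are exactly the measurable sets and meager sets are exactly the null sets. The actual derivation does not restrict the Euclidean theorem to subintervals; it transfers the null partition to a partition into meager sets of a compact (hence \v Cech complete) space attached to the measure algebra --- the Stone space, of $\pi$-weight at most $2^\omega$ --- via a lifting, under which measurability of unions in $R$ corresponds to the Baire property there, and only then applies the theorem. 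So the missing idea is a change of space and of the target $\sigma$-algebra, not a sharper case analysis on $R$ itself.
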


Since the questions on Kuratowski's theorem and continuity is the same question, in \cite{EFK1} the authors proved equivalent results to previous theorem but concerning continuous functions. This result is equivalent to one in Selectors Theory.

\begin{theorem}[\cite{EFK1}]
	If $X$ is a \v Cech complete space and has a pseudobase of cardinality $\leq 2^\omega$, then for each map $f \colon X \to Y$ having the Baire property into a space $Y$ with a $\sigma$-disjoint base there exists a meager set $F \subset X$ such that the restriction $f$ to $X \setminus F$ is continuous.
\end{theorem}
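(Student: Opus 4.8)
The plan is to deduce this continuity statement from the preceding partition theorem for \v Cech complete spaces with small pseudobase (Theorem~3 in the excerpt) by contraposition, exploiting the standing equivalence pointed out in the introduction: a \emph{bad} function, namely one that has the Baire property yet cannot be made continuous off any meager set, is exactly the obstruction encoded by a Kuratowski partition. So I would fix a $\sigma$-disjoint base $\mathcal{B} = \bigcup_n \mathcal{B}_n$ of $Y$, where each $\mathcal{B}_n$ is a pairwise disjoint family of open sets. Since $f$ has the Baire property, for every $V \in \mathcal{B}$ I can write $f^{-1}(V) = G_V \triangle M_V$ with $G_V$ open and $M_V$ meager; because the sets $f^{-1}(V)$ are pairwise disjoint for $V$ ranging over a single level $\mathcal{B}_n$, the open approximations $G_V$ can be taken pairwise disjoint within each level. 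The classical Kuratowski argument for separable $Y$ then shows that if the combined error set $M = \bigcup_{V \in \mathcal{B}} M_V$ were meager, every $f^{-1}(V)$ would be relatively open on $X \setminus M$, so $f$ restricted to $X \setminus M$ would already be continuous and nothing more would be needed.

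The substance lies in the remaining case. Assuming, toward a contradiction, that no meager set $F$ makes $f|_{X \setminus F}$ continuous, I would manufacture a Kuratowski partition of $X$ and invoke Theorem~3 to reach a contradiction. The natural candidate is the partition of $X$ into the fibers $f^{-1}(y)$, refined if necessary by discarding a meager set that absorbs the (few) non-meager fibers and the interiors on which $f$ is locally constant. The $\sigma$-disjoint base is precisely what secures the Kuratowski property: for any subfamily of fibers, the corresponding union is $f^{-1}(S)$ for some $S \subseteq Y$, and resolving $S$ through the levels $\mathcal{B}_n$ expresses this union as a countable Boolean combination of the sets $f^{-1}(V)$, each having the Baire property, whence the union does as well. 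Thus every subfamily union has the Baire property, and what remains is to see that the pieces of the refined partition are genuinely meager, which is where the failure of continuity off a meager set is to be converted into meagerness of the individual fibers.

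The main obstacle is exactly this last passage: turning the hypothesis \emph{``$f$ is not continuous off any meager set''} into the assertion that the relevant partition pieces are meager, so that the construction yields an honest Kuratowski partition rather than a degenerate one, while simultaneously controlling the uncountably many error sets $M_V$ spread across all levels $\mathcal{B}_n$. Here the $\sigma$-disjointness is used twice, once to make the level-wise open approximations disjoint and once to reassemble arbitrary fiber-unions as countable combinations, and the hypotheses on $X$ enter only through Theorem~3, which forbids the very Kuratowski partition just built. That contradiction forces the easy case to hold, producing the required meager set $F$ off which $f$ is continuous and completing the proof.
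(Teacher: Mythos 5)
The paper itself contains no proof of this theorem --- it is quoted from \cite{EFK1} with only the surrounding remark that the continuity question and the partition question are ``the same question'' --- so your proposal has to stand on its own. Its overall architecture (contradict Theorem 3 by extracting a Kuratowski partition from a Baire-property map that is not continuous off any meager set) is indeed the intended route, but one of your concrete steps is false. You claim that for an arbitrary subfamily of fibers the union $f^{-1}(S)$ can be written, by ``resolving $S$ through the levels $\mathcal{B}_n$,'' as a countable Boolean combination of the sets $f^{-1}(V)$. An arbitrary $S \subseteq Y$ is not a countable Boolean combination of basic open sets --- it need not even be Borel --- and the hypothesis that $f$ has the Baire property controls only preimages of open sets. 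If preimages of \emph{arbitrary} subsets of $Y$ automatically had the Baire property, the fiber partition would be a Kuratowski partition whenever its pieces are meager and the theorem would be a one-line corollary of Theorem 3; it is not. What $\sigma$-disjointness actually buys is that, for a \emph{fixed} level $n$, every subfamily of $\{f^{-1}(V) \colon V \in \mathcal{B}_n\}$ has union $f^{-1}(\bigcup \mathcal{S})$ with $\bigcup\mathcal{S}$ open in $Y$, hence of the Baire property; the partition handed to Theorem 3 must be built from a single level restricted to a suitable open set, not from the fibers.

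The second problem is the passage you yourself name as ``the main obstacle'': converting the failure of continuity off every meager set into meagerness of the partition pieces. That is where the entire proof lives, and it is absent. The standard localization runs as follows: for $V \in \mathcal{B}_n$ let $G_V$ be the union of all open sets in which $f^{-1}(V)$ is comeager. If for every $n$ the set of points lying outside $\bigcup_{V \in \mathcal{B}_n} G_V$ at which $f^{-1}(\bigcup\mathcal{B}_n)$ is locally non-meager is itself meager, the countably many level-wise error sets assemble into the required meager $F$. Otherwise there are an $n$ and a nonempty open $U$ on which every $f^{-1}(V) \cap U$, $V \in \mathcal{B}_n$, is meager (a Baire-property set that is nowhere locally comeager is meager) while their union is comeager in $U$; that restricted family is the Kuratowski partition contradicting Theorem 3 applied to $U$, which is again \v Cech complete with a pseudobase of cardinality $\leq 2^\omega$. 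Your dichotomy --- either $\bigcup_{V} M_V$ is meager or pass to the fibers --- is not the right case split: $\bigcup_V M_V$ is an uncountable union of meager sets and there is no reason for it to be meager even when the conclusion of the theorem holds, so the ``easy case'' as you state it need never occur.
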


Remind that a family $\mathcal{A}$ of subsets of a space $X$ is \textit{point finite} iff for each point $x \in X$ the set $\{A \in \mathcal{A} \colon x \in A \}$ is finite. In \cite{FG} the authors considering  point finite families of meager sets proved the following result. 		

\begin{theorem}[\cite{FG}]
	Let $\mathcal{F}$ be a point finite family of meager sets covering pseudobasically compact space $X$ and $\pi w (X) \leq 2^\omega$. Then there exists a family $\mathcal{A} \subset \mathcal{F}$ such that $\bigcup \mathcal{A}$ has not the Baire property.
\end{theorem}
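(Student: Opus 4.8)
The plan is to reduce the point finite case to the already-established partition case, Theorem~4, by constructing from the given point finite family a genuine partition into meager sets and transferring the failure of the Baire property back. Given the point finite family $\mathcal{F}$ covering $X$, I would first observe that point finiteness lets me refine $\mathcal{F}$ into an honest partition: for each $x \in X$ the set $\{F \in \mathcal{F} \colon x \in F\}$ is finite and nonempty, so the equivalence relation identifying points that lie in exactly the same members of $\mathcal{F}$ has meager classes (each class is an intersection of finitely many members of $\mathcal{F}$ minus the rest, hence contained in a single member of $\mathcal{F}$ and therefore meager). This yields a partition $\mathcal{G}$ of $X$ into meager sets, and crucially each cell of $\mathcal{G}$ is a Boolean combination of members of $\mathcal{F}$.

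Next I would record that ``pseudobasically compact with $\pi w(X) \le 2^\omega$'' supplies exactly the hypotheses needed to apply Theorem~4 (the \v Cech complete, pseudobase $\le 2^\omega$ setting); so one should either check that pseudobasic compactness gives \v Cech completeness, or, more likely, re-run the underlying argument of Theorem~4 directly in the pseudobasically compact setting, since the notion is phrased to be the natural hypothesis here. Applying that machinery to the partition $\mathcal{G}$ produces a subfamily $\mathcal{B} \subseteq \mathcal{G}$ whose union $\bigcup \mathcal{B}$ fails the Baire property.

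Finally I would pull this back to $\mathcal{F}$. Since every cell of $\mathcal{G}$ is a finite Boolean combination of members of $\mathcal{F}$, the algebra generated by $\mathcal{F}$ contains $\bigcup \mathcal{B}$; I would argue that if every subfamily union $\bigcup \mathcal{A}$ (for $\mathcal{A} \subseteq \mathcal{F}$) had the Baire property, then since the sets with the Baire property form a $\sigma$-algebra closed under the relevant finite Boolean operations and countable unions, every element of the algebra generated by $\mathcal{F}$ — in particular $\bigcup \mathcal{B}$ — would also have the Baire property, contradicting the previous step. Hence some $\mathcal{A} \subseteq \mathcal{F}$ has $\bigcup \mathcal{A}$ without the Baire property.

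The main obstacle I anticipate is the transfer step combined with the change of hypothesis: I must make sure that the Boolean combinations expressing $\bigcup \mathcal{B}$ in terms of $\mathcal{F}$ are genuinely \emph{countable} (or finite) so that closure of the Baire-property $\sigma$-algebra applies, and that point finiteness — rather than local finiteness or $\sigma$-point-finiteness — suffices to keep each cell inside a single meager member of $\mathcal{F}$. The subtlety is that a subfamily $\mathcal{B}$ of cells need not correspond to a single subfamily of $\mathcal{F}$ via a simple union; controlling this correspondence, and verifying that ``pseudobasically compact, $\pi w \le 2^\omega$'' really does license the category argument of Theorem~4, is where the real work lies.
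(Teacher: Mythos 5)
The paper only states this theorem as a survey item cited from [FG] and gives no proof of it, so there is nothing internal to compare against; judging your proposal on its own terms, the reduction breaks down at exactly the transfer step you flag at the end. The difficulty is not a technicality to be "controlled" --- it is fatal to the scheme. A subfamily $\mathcal{B}$ of your refined partition $\mathcal{G}$ has union $\bigcup\mathcal{B}=\{x\in X\colon \mathcal{F}_x\in S\}$ for some set $S$ of finite subsets of $\mathcal{F}$ (where $\mathcal{F}_x=\{F\in\mathcal{F}\colon x\in F\}$), and this is an arbitrary, typically uncountable, union of cells. Each \emph{single} cell is indeed a countable Boolean combination of members of $\mathcal{F}$ and of one subfamily union (namely $\bigcap_{F\in s}F\setminus\bigcup(\mathcal{F}\setminus s)$) --- but each single cell is also meager, hence has the Baire property for free, so that observation buys nothing. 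What you would actually need is that an \emph{arbitrary} union of cells lies in the $\sigma$-algebra generated by the sets $\bigcup\mathcal{A}$, $\mathcal{A}\subseteq\mathcal{F}$, and that is false in general: take $\mathcal{F}$ to be the superposition of two partitions $\{F_\alpha\colon\alpha<\kappa\}$ and $\{F'_\beta\colon\beta<\kappa\}$ (each point in exactly one of each), so the cells are the sets $F_\alpha\cap F'_\beta$; subfamily unions of $\mathcal{F}$ only see unions of rows and columns of the $\kappa\times\kappa$ grid, and no countable sequence of Boolean operations on rows and columns produces an arbitrary subset of the grid. So from ``every $\bigcup\mathcal{A}$, $\mathcal{A}\subseteq\mathcal{F}$, has the Baire property'' you cannot conclude that every $\bigcup\mathcal{B}$, $\mathcal{B}\subseteq\mathcal{G}$, has it, and the contradiction with the partition theorem never materializes.

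There is also the unresolved hypothesis mismatch: Theorem 4 is stated for \v Cech complete spaces with a pseudobase of size at most $2^\omega$, while here the space is only pseudobasically compact with $\pi w(X)\le 2^\omega$, and you offer no argument that one setting subsumes the other; ``re-run the underlying argument'' is a promissory note, not a proof. The point-finite case genuinely requires working with the family $\mathcal{F}$ itself (for instance via the multiplicity sets $\{x\colon|\mathcal{F}_x|=n\}$ and a localization argument on a non-meager piece), not a formal reduction to the partition case, which is presumably why [FG] is a separate paper rather than a corollary of [EFK].
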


\section{The equistence of non-measurable sets}

In the literature there are well-known constructions of non-measurable sets like Vitali's, Bernstein's, Sierpi\'nski's and Shelah's (for the last one see \cite{S}).  However, the next theorem is an old result published by Lusin in 1912, (see Theorem 8.2. p. 37 in \cite{JO}).	

\begin{theorem}[\cite{JO}]
	A real-valued function on $R$ is measurable if and only if for every $\varepsilon >0$ there exists a set $E \subset R$ with $\mu(E)<\varepsilon$ such that
	the restriction of $f$ to $R\setminus E$ is continuous, where $\mu$ denotes the Lebesque measure.
\end{theorem}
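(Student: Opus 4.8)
The final statement is Lusin's theorem, so the plan is to prove the two implications separately, the substantive one being that measurability forces continuity off a set of arbitrarily small measure. For the easy direction ($\Leftarrow$) I would suppose that for each $\varepsilon>0$ there is a (measurable) set $E$ with $\mu(E)<\varepsilon$ such that $f$ restricted to $R\setminus E$ is continuous, and apply this with $\varepsilon=1/n$ to obtain sets $E_n$ with $\mu(E_n)<1/n$. Fixing an open $U\subseteq R$, the set $\{x\in R\setminus E_n : f(x)\in U\}$ is relatively open in $R\setminus E_n$, hence equals $(R\setminus E_n)\cap V$ for some open $V$, and is therefore measurable. Taking the union over $n$ gives $\{x:f(x)\in U\}\setminus\bigcap_n E_n$, a measurable set, while $\mu(\bigcap_n E_n)=0$; by completeness of Lebesgue measure $\{x:f(x)\in U\}$ is itself measurable, so $f$ is measurable.

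For the converse ($\Rightarrow$) I would first reduce to a domain of finite measure, decomposing $R=\bigsqcup_{m}[m,m+1)$ and distributing the allowed error as $\varepsilon\cdot 2^{-|m|-2}$ over the blocks. To recombine the local continuities into global continuity I would insist that every ``good'' set produced on a block be compact and bounded away from the integer endpoints, so that the union of the good sets is closed, with its pieces mutually separated by gaps around the integers. It then suffices to treat a single piece of finite measure. There I would approximate $f$ by simple functions $\varphi_k\to f$ pointwise, obtained by truncating $f$ to $[-k,k]$ and dyadically quantizing, and then invoke two classical tools. Egorov's theorem upgrades the pointwise convergence to uniform convergence off a set $E_0$ with $\mu(E_0)<\varepsilon/2$; and inner regularity of Lebesgue measure by closed sets makes each $\varphi_k$ continuous off a small set: writing $\varphi_k=\sum_{i=1}^{N}c_i\chi_{A_i}$ with finitely many disjoint pieces, I would choose closed $F_i\subseteq A_i$ with $\mu(A_i\setminus F_i)$ tiny and observe that $\varphi_k$ is continuous on the finite disjoint union $\bigcup_i F_i$, since finitely many disjoint closed sets are mutually separated.

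Collecting sets $E_k$ with $\mu(E_k)<\varepsilon\cdot 2^{-k-2}$ on which $\varphi_k$ is discontinuous, and setting $E=E_0\cup\bigcup_k E_k$, gives $\mu(E)<\varepsilon$; on $R\setminus E$ every $\varphi_k$ is continuous and $\varphi_k\to f$ uniformly, and a uniform limit of continuous functions is continuous, so $f$ restricted to $R\setminus E$ is continuous, completing this direction.

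The main obstacle is precisely the reassembly of local continuity into continuity on the whole complement, both across the countably many finite-measure blocks and within a single block, where the continuity of each simple function rests on its finitely many level sets being separated. Since ``continuous on $A$'' together with ``continuous on $B$'' does not yield ``continuous on $A\cup B$'' in general, the real content lies in keeping the good sets closed (indeed compact) and separated by gaps at the integers and between the level sets; once that is arranged, the measure bookkeeping and the appeals to Egorov's theorem, inner regularity, and completeness of $\mu$ are routine.
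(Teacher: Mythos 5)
Your proof is correct, but note that the paper itself does not prove this statement --- it only cites it as Lusin's theorem (Theorem 8.2, p.~37 of Oxtoby's \emph{Measure and Category}), so the comparison is with the cited source rather than with an in-paper argument. Your easy direction (take $E_n$ with $\mu(E_n)<1/n$, observe $f^{-1}(U)\setminus\bigcap_n E_n$ is measurable, and finish by completeness of $\mu$) is the standard one. For the hard direction you give the Egorov-based proof: reduce to blocks of finite measure, approximate by simple functions, make each simple function continuous on a closed set via inner regularity of the level sets, and pass to the uniform limit; your care about keeping the good sets separated (by gaps at the integers and between the finitely many closed level sets) correctly addresses the one genuinely delicate point, since continuity on $A$ and on $B$ does not give continuity on $A\cup B$. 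Oxtoby's cited proof is shorter and avoids Egorov entirely: fix a countable base $\{V_i\}$ of $R$, choose for each $i$ a closed $F_i$ and an open $G_i$ with $F_i\subseteq f^{-1}(V_i)\subseteq G_i$ and $\mu(G_i\setminus F_i)<\varepsilon/2^i$, and set $E=\bigcup_i(G_i\setminus F_i)$; then $f^{-1}(V_i)\setminus E=G_i\setminus E$ is relatively open in $R\setminus E$ for every $i$, so the restriction is continuous at once, with no block decomposition or limiting process needed. Your route costs more bookkeeping but generalizes more readily (e.g.\ to targets without a countable base playing this role); the regularity argument buys brevity by exploiting second countability of the codomain directly.
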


\noindent
Below we show the combinatorial proof of the existence of one more non-measurable set (see \cite{FJ1}). However, the next result is proved in Measure Theory, the original Lusin Theorem is obtained for Category Theory, (see Appendix in \cite{WS}, p. 172 - 173).
In the proof we will use Fubini Theorem on product measures which we assume to be well-known.

\begin{proposition}[\cite{FJ1}]
	There exists a set $X \subset [0,1]$ of  apositive Lebesgue measure which is non-measurable.
\end{proposition}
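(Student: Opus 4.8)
The plan is to manufacture the non-measurable set from a failure of the Fubini theorem. Rather than work in $[0,1]$ directly, I would first build a set $S \subseteq [0,1]^2$ in the unit square whose two families of sections have incompatible sizes: every vertical section $S_x = \{y : (x,y) \in S\}$ Lebesgue null, and every horizontal section $S^y = \{x : (x,y) \in S\}$ of full measure $1$. If $S$ were measurable in the plane, Fubini would give \[ \int_0^1 \lambda(S_x)\,dx \;=\; \lambda_2(S) \;=\; \int_0^1 \lambda(S^y)\,dy, \] that is $0 = \lambda_2(S) = 1$, which is absurd; hence $S$ is non-measurable. The entire argument thus reduces to a combinatorial construction of sections of the two prescribed sizes.

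To produce $S$, I would fix a well-ordering $\prec$ of $[0,1]$ in which every proper initial segment $\{y : y \prec x\}$ is Lebesgue null, and put $S = \{(x,y) \in [0,1]^2 : y \prec x\}$. Then $S_x$ is exactly an initial segment and hence null, while $S^y = [0,1] \setminus \{x : x \preceq y\}$ is the complement of an initial segment and hence of measure $1$. A short computation with the same sections shows that any measurable set containing $S$ has measure $1$ and any measurable set contained in $S$ has measure $0$, so $S$ has inner measure $0$ and outer measure $1$; this is the quantitative form of its non-measurability.

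To meet the statement as phrased, with the set living inside the interval, I would transport $S$ along a measure isomorphism $\Phi \colon [0,1]^2 \to [0,1]$ that carries the planar null ideal onto the linear one; such a $\Phi$ exists because any atomless standard probability space is measurably isomorphic to $([0,1],\lambda)$. Since $\Phi$ preserves the null ideal it preserves both inner and outer measure, so $X = \Phi(S) \subset [0,1]$ is non-measurable with inner measure $0$ and outer measure $1$; in particular its (outer) Lebesgue measure is positive, as required.

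The step I expect to be the genuine obstacle is securing the well-ordering $\prec$ all of whose initial segments are null, together with enough control to apply Fubini. In plain ZFC an initial segment of cardinality smaller than the continuum need not be null, so this property is not automatic: it is exactly the point at which a set-theoretic hypothesis enters, the continuum hypothesis making every initial segment countable (hence null) and more generally a Sierpi\'nski-type decomposition being what is really needed. By comparison, the Fubini contradiction and the transfer through $\Phi$ are routine once the sections are in place, so the weight of the proof in \cite{FJ1} should fall on arranging $\prec$ and checking that the sections are genuinely measurable of the claimed sizes.
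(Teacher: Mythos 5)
You have correctly identified the weak point of your own argument, but you have not repaired it, and that is a genuine gap: the well-ordering $\prec$ of $[0,1]$ in which every proper initial segment is Lebesgue null is not available in ZFC. Under CH one orders $[0,1]$ in type $\omega_1$ and every proper initial segment is countable, hence null; in general a proper initial segment merely has cardinality less than $\mathfrak{c}$, and ZFC does not prove that such sets are null. Since the proposition is asserted unconditionally (and is, after all, a ZFC theorem --- Vitali or Bernstein sets already witness it), a proof that is conditional on CH or on a Sierpi\'nski-type hypothesis does not establish it. Everything downstream of $\prec$ in your plan --- the Fubini contradiction for the triangle $S=\{(x,y)\colon y\prec x\}$, the computation of inner measure $0$ and outer measure $1$, the transport along a measure isomorphism $\Phi\colon[0,1]^2\to[0,1]$ --- is sound, but it all rests on the unconstructed first step.

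It is worth comparing with the paper's proof, which is close to yours in spirit but dodges exactly this obstacle. It argues by contradiction from the hypothesis that \emph{every} subset of $[0,1]$ is measurable, starts from an arbitrary partition $\mathcal{F}=\{F_\alpha\colon\alpha<\lambda\}$ of $[0,1]$ into null sets, and uses the reductio hypothesis itself --- all initial unions $\bigcup_{\beta<\alpha}F_\beta$ are then measurable, so the indexing can be arranged with $\mu(\bigcup_{\beta<\alpha}F_\beta)=0$ for every $\alpha$ while $\mu(\bigcup\mathcal{F})>0$ --- in place of CH. The Sierpi\'nski triangle reappears as the union of the null products $F_\alpha\times\bigcup_{\beta<\alpha}F_\beta$, with null vertical sections and co-null horizontal sections just as in your $S$; but instead of moving a planar set to the line by an isomorphism, the paper covers each $F_\alpha$ by open sets $G^{F_\alpha}_n$ of measure less than $1/n$, fixes a countable base $\{U_m\}$, and applies Fubini to conclude that for some $n,m$ the set $\bigcup\{F_\alpha\colon\alpha\in A_{n,m}\}$ with $A_{n,m}=\{\alpha\colon U_m\subseteq G^{F_\alpha}_n\}$ is already non-measurable inside $[0,1]$, contradicting the hypothesis. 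That localization through a countable base is the idea your proposal is missing: it simultaneously replaces your unprovable well-ordering hypothesis and your transfer map $\Phi$, and it produces the non-measurable set as a union of members of the partition, which is the form the rest of the paper actually needs.
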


\begin{proof}
	Suppose that all subsets of $[0,1]$ are measurable. Let $\mathcal{F}$ be a partition of $X$. Order $\mathcal{F} =  \{F_\alpha \colon \alpha <\lambda\}$, where $\lambda$ is a cardinal not greater than $\mathfrak{c}$. We can assume that $\mu(\bigcup_{\beta < \alpha}F_\beta) = 0$ for all $\alpha < \lambda$ and $\mu(\bigcup \mathcal{F}) >0$,  where $\mu$ denotes a Lebesgue measure.
	
	Consider the following sequences of sets in $[0,1] \times [0,1]$: for $\alpha = 0$ take $F_0 \times F_0$ and for $0 < \alpha < \alpha$ take $F_\alpha \times (\bigcup_{\beta < \alpha} F_\beta)$.
	Then all these sets defined above have Lebesgue measure zero.
	
	For each $F \in \mathcal{F}$ consider open $G_\delta$ sets  $G^{F}_{n}, n \in \omega$ such that $F \subset \bigcap_{n \in \omega} G^{F}_{n}$, $\mu (\bigcap_{n\in \omega}G^{F}_{n}) = 0$ and $\mu (G^{F}_{n}) < \frac{1}{n}$.
	
	Let $\{U_m \colon m \in \omega\}$ be a base in $X$.
	For $n, m \in \omega$ consider
	$$A_{n, m} = \{\alpha < \lambda \colon U_m \subset G^{F_\alpha}_{n}\}.$$
	Since $\mu(F_\alpha) = 0$, for all $\alpha < \lambda$ and $\mu (\bigcup_{\alpha < \lambda}F_\alpha) >0$. At the presence of Fubini Theorem we have that the set
	$\bigcup \{F_\alpha \colon \alpha \in A_{n, m}\}$ is non-measurable for some $n, m \in \omega$.
\end{proof}

In \cite{FJ1} it is also shown that the existence of Kuratowski partitions are strongly connected with the structure of quotient algebras (see also \cite{GS}). (The analogous result to Theorem 7 one can formulate in Category Theory). Let $LM$ denotes the family of all Lebesgue measurable subsets of an unit and $\triangle$
- the ideal of Lebesgue-null sets

\begin{theorem}[\cite{FJ1}]
	Let $\kappa $ be a regular cardinal and $I_\kappa$ be a $\kappa$-complete ideal. Then $P(\kappa)/I_\kappa$ is not isomorphic to $LM/\triangle$.
\end{theorem}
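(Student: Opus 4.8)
The plan is to distinguish the two Boolean algebras by a structural invariant that is preserved under isomorphism. The key observation is that $LM/\triangle$ is a measure algebra, so it carries a strictly positive countably additive measure (the Lebesgue measure descends to the quotient since $\triangle$ is exactly its null ideal), and in particular it satisfies the countable chain condition: there can be no uncountable family of pairwise disjoint nonzero elements, because their measures would sum to something finite. By contrast, I would argue that $P(\kappa)/I_\kappa$ fails the countable chain condition whenever $\kappa$ is an uncountable regular cardinal carrying a $\kappa$-complete ideal, because $\kappa$ admits a partition into $\kappa$ many pieces none of which lies in $I_\kappa$. Since $\kappa > \omega$, this yields an uncountable antichain in the quotient, and the two algebras cannot be isomorphic.

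First I would set up the measure-algebra side: recall that $\triangle$ is a $\sigma$-ideal, that $LM$ is a $\sigma$-algebra, and that the quotient map carries Lebesgue measure to a strictly positive $\sigma$-additive measure $\bar\mu$ on $LM/\triangle$ with $\bar\mu(x)=0$ iff $x=0$. From strict positivity and countable additivity I would extract the ccc: given pairwise disjoint nonzero $x_i$, the sets with $\bar\mu(x_i)>1/n$ are finite for each $n$, so only countably many $x_i$ are nonzero. Next I would treat the ideal side: the case $\kappa=\omega$ can be handled separately (or excluded by regularity of an uncountable $\kappa$), and for uncountable regular $\kappa$ I would produce a partition $\kappa=\bigsqcup_{\alpha<\kappa}S_\alpha$ into $\kappa$ many $I_\kappa$-positive sets. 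The cleanest way is to use that a $\kappa$-complete ideal on a regular $\kappa$ cannot be $\kappa$-saturated in the trivial sense needed here: if every partition into $\kappa$ pieces had all but boundedly many pieces in $I_\kappa$, $\kappa$-completeness would force their union, hence $\kappa$ itself modulo $I_\kappa$, to be small, contradicting that $I_\kappa$ is proper.

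The main obstacle is the ideal side, and specifically producing the uncountable antichain in $P(\kappa)/I_\kappa$ in a way that uses only $\kappa$-completeness and properness of $I_\kappa$ and regularity of $\kappa$. The subtle point is that a $\kappa$-complete proper ideal could conceivably be a maximal (prime) ideal, in which case $P(\kappa)/I_\kappa$ is the two-element algebra and trivially not isomorphic to the atomless algebra $LM/\triangle$; so I would first dispose of the case where $I_\kappa$ is prime (then the quotient has an atom, whereas $LM/\triangle$ is atomless, giving the non-isomorphism immediately). In the remaining case $I_\kappa$ is non-maximal and $\kappa$-complete, and I would argue that one can split $\kappa$ into two $I_\kappa$-positive sets and then, by transfinite recursion of length $\kappa$ using $\kappa$-completeness at each step to keep the residual positive, extract $\kappa$ many pairwise disjoint $I_\kappa$-positive sets, yielding the desired uncountable antichain. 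Comparing the ccc verdicts — $LM/\triangle$ is ccc while $P(\kappa)/I_\kappa$ is not — then completes the proof.
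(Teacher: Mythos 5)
The paper itself contains no proof of this theorem --- it is quoted from \cite{FJ1} (listed as ``in preparation'') with a pointer to Gitik--Shelah \cite{GS} --- so your argument has to be judged on its own terms, and it has a fatal gap on the ideal side. Your entire strategy rests on showing that $P(\kappa)/I_\kappa$ fails the countable chain condition, but that is not provable from the hypotheses and is consistently false. If $\kappa$ is a real-valued measurable cardinal and $\mu$ is a witnessing $\kappa$-additive probability measure on $P(\kappa)$, then its null ideal $I_\kappa$ is a proper, non-prime, $\kappa$-complete ideal on the regular cardinal $\kappa$, yet $P(\kappa)/I_\kappa$ carries a strictly positive countably additive measure and is therefore ccc and atomless --- exactly the same verdicts your two invariants return for $LM/\triangle$. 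The construction of a $\kappa$-sized antichain breaks down in two concrete places: (i) the claim that ``if all but boundedly many pieces of a partition into $\kappa$ pieces lie in $I_\kappa$, then $\kappa$-completeness forces $\kappa$ itself to be small modulo $I_\kappa$'' is wrong, because $\kappa$-completeness only controls unions of \emph{fewer than} $\kappa$ ideal sets (the partition of $\kappa$ into singletons already refutes the inference); and (ii) in the transfinite recursion, non-primeness lets you split $\kappa$ once but gives no reason why every residual positive set splits again, and at limit stages you would need the complement of a union of $<\kappa$ many previously extracted \emph{positive} sets to remain positive, which $\kappa$-completeness says nothing about.

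The statement is in substance the Gitik--Shelah theorem, and the genuinely hard case is precisely the one a chain-condition count cannot touch: a $\kappa$-complete ideal whose quotient is already an atomless ccc measure algebra. There one must show that the Maharam type of $P(\kappa)/I_\kappa$ cannot be countable, i.e.\ that forcing with $P(\kappa)/I_\kappa$ cannot amount to adding a single random real; Gitik and Shelah do this by a generic-ultrapower and forcing analysis, not by exhibiting an uncountable antichain. The parts of your proposal that do work are the easy ones: the quotient measure $\bar\mu$ on $LM/\triangle$ is strictly positive and countably additive, hence $LM/\triangle$ is ccc and atomless, and the case of a prime $I_\kappa$ is dispatched by atomlessness. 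Everything beyond that needs a fundamentally different idea.
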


Since it is provable that the existence of a Kuratowski partition of the Baire (complete) metric space is equiconsistent in ZFC with the existence of a measurable cardinal, the results below concern the equistence of such a cardinal.

Let $(X, \Sigma, \mu)$ be a measure space. A measure $\mu$ is \textit{perfect} iff for every $\Sigma$-measurable function $f \colon X \to R$  and $E \subset R$ such that $f^{-1}(E) \in \Sigma$ there exists a Borel set $B \subset E$ such that $\mu(f^{-1}(E)) = \mu (f^{-1}(B))$.

In \cite{FGPR} the authors proved the following result.

\begin{theorem}[\cite{FGPR}]
	Let $(X, \Sigma, \mu)$ be a space with a perfect measure.
	Let $\mathcal{F}$ be a point cover consisting of sets of measure zero and let $|\mathcal{F}|$ be smaller then the first measurable cardinal. Then there exists $\mathcal{A} \subset \mathcal{F}$ such that $\bigcup \mathcal{A}$ is not measurable and has inner measure zero.
\end{theorem}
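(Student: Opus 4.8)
The plan is to reduce the statement to an application of the earlier results on Kuratowski partitions, using the perfectness of the measure to bridge between category-theoretic and measure-theoretic non-measurability. First I would set up the combinatorial framework: enumerate the point cover $\mathcal{F} = \{F_\xi : \xi < \lambda\}$ where $\lambda = |\mathcal{F}|$ is below the first measurable cardinal, and since each $F_\xi$ has measure zero while $\bigcup\mathcal{F} = X$ has positive measure, I would look for a subfamily $\mathcal{A} \subset \mathcal{F}$ whose union is non-measurable. The natural strategy mirrors the proof of the proposition on the existence of a non-measurable set: use the measure-zero property of each member together with a counting argument over a base to force non-measurability of some union.

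The key technical step will be to exploit the cardinality bound. The assumption that $|\mathcal{F}|$ is smaller than the first measurable cardinal means that any $\lambda$-additive $\{0,1\}$-valued measure defined on $P(\lambda)$ must be trivial (concentrated on a point or $\sigma$-additively supported on countably many points). I would argue by contradiction: suppose every subfamily union $\bigcup\mathcal{A}$ is measurable. Then define a set function on $P(\lambda)$ by $\nu(A) = \mu^*\!\left(\bigcup_{\xi \in A} F_\xi\right)$ (or use inner/outer measure appropriately), and show that the hypothesis forces $\nu$ to be a nontrivial measure on $\lambda$. Because each $F_\xi$ is null and the whole union has positive measure, $\nu$ would vanish on points (and on countable sets, by $\sigma$-additivity of $\mu$) yet be positive on $\lambda$ itself — this is precisely the configuration yielding a measurable cardinal below the assumed bound, contradicting the hypothesis. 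The perfectness of $\mu$ enters here to guarantee that the induced set function is genuinely a measure and that measurability of the unions transfers faithfully to additivity of $\nu$.

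To obtain the sharper conclusion that $\bigcup\mathcal{A}$ has inner measure zero, I would refine the contradiction argument so that the witnessing subfamily is built as a union of null members with no measurable positive-measure core. Concretely, one chooses $\mathcal{A}$ so that every measurable set contained in $\bigcup\mathcal{A}$ is null; since each $F_\xi$ is null, any measurable subset meeting only countably many members is null by countable additivity, and the cardinality bound prevents an uncountable null-free measurable core from assembling into positive inner measure. Thus the non-measurable union automatically has inner measure zero while, by the failure of measurability, positive outer measure.

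The hard part will be making the passage from ``every union is measurable'' to ``a nontrivial $\lambda$-additive two-valued measure exists on $\lambda$'' fully rigorous, because one must verify the correct additivity: the measure $\mu$ is only countably additive, so the induced $\nu$ is a priori only countably additive, which by itself does not produce a measurable cardinal. The perfectness hypothesis is what I expect to supply the missing uniformity — it should allow one to replace arbitrary unions by Borel hulls with matching measure, so that the additivity of $\nu$ can be upgraded along the lines needed to invoke the standard Ulam-type dichotomy at cardinals below the first measurable. Pinning down exactly how perfectness delivers this upgrade, rather than merely countable additivity, is the crux of the argument.
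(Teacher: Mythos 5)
The paper does not prove this theorem; it is quoted verbatim from the cited source [FGPR], so there is no in-paper argument to compare yours against, and your proposal has to stand on its own. Your overall strategy --- assume every subfamily union is measurable, push $\mu$ forward to a countably additive set function $\nu$ on $P(\lambda)$ vanishing on singletons with positive total mass, and derive a measurable cardinal $\le \lambda$ --- is indeed the standard route. But you have left the decisive step unproved, and you say so yourself: passing from ``$\nu$ is a nontrivial countably additive measure on $P(\lambda)$'' to ``$\lambda$ is at least the first \emph{two-valued} measurable cardinal.'' Ulam's dichotomy only gives that $\lambda$ is at least the first \emph{real-valued} measurable cardinal unless one can exclude the atomless case, and that exclusion is precisely where perfectness must be used: if $\nu$ were atomless, an injection of $\lambda$ into $\mathbb{R}$ composed with the quotient map $X \to \lambda$ would be a $\Sigma$-measurable real-valued function whose image measure is an atomless countably additive measure defined on \emph{all} subsets of a set of reals; the Borel-hull property in the definition of perfectness forces this image measure to agree with a Borel measure on Borel subsets of its support, which is incompatible with atomlessness on the full power set. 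Without this (or an equivalent) argument, your $\nu$ only contradicts ``$\lambda <$ first real-valued measurable,'' which is not the hypothesis. You also misstate the role of perfectness in your second paragraph: the countable additivity of $\nu$ needs no perfectness at all once the cover is disjointified --- it follows from $\sigma$-additivity of $\mu$ and the assumed measurability of all unions.

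Two secondary gaps. First, $\mathcal{F}$ is only a point cover, not a partition, so $A \mapsto \mu^*(\bigcup_{\xi \in A} F_\xi)$ is not additive as defined; you must either disjointify (each piece of the disjointification is still null and unions of pieces are still assumed measurable) or exploit point-finiteness explicitly. Second, the inner-measure-zero refinement is asserted rather than proved: the clean way is an exhaustion along a well-ordering of $\mathcal{F}$ --- take the least initial segment whose union has positive outer measure, observe it must have uncountable cofinality (else continuity of $\mu^*$ from below along an $\omega$-chain gives a contradiction), and then check that any measurable kernel of positive measure would already be captured by a proper initial segment. Your remark that ``the cardinality bound prevents an uncountable null-free measurable core'' does not substitute for this argument.
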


In \cite{FK} the authors using metamathemacal methods proved the following result.

\begin{theorem} [\cite{FK}]
	The following theories are equiconsistent:
	\\ (1) ZFC $+ \exists$ measurable cardinal;
	\\ (2) ZFC $+$ there is a complete metric space $X$, a metric space $Y$, and a function $f \colon X \to Y$ having the Baire property such that there is no meager set $F \subseteq X$ for which the restriction $f$ to $X \setminus F$ is continuous;
	\\ (3) ZFC $+$ there is a Baire metric space $X$, a metric $Y$, and a function $f \colon X \to Y$ having the Baire property such that there is no meager set $F \subseteq X$ for which the restriction $f$ to $X \setminus F$ is continuous.	
\end{theorem}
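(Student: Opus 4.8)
The plan is to prove the three theories equiconsistent by closing the cycle of relative consistency implications $\mathrm{Con}(1)\Rightarrow\mathrm{Con}(2)\Rightarrow\mathrm{Con}(3)\Rightarrow\mathrm{Con}(1)$, routing everything through Kuratowski partitions. The first reduction I would make invokes the equivalence recorded in the introduction: over ZFC the existence of a map $f\colon X\to Y$ with the Baire property for which no meager $F$ makes $f|_{X\setminus F}$ continuous is equivalent to the existence of a Kuratowski partition of $X$. In the direction I need for the upper bound this is easy: the cell-assignment map of a Kuratowski partition $\mathcal{F}$ sends $X$ onto $\mathcal{F}$ (suitably metrized), has the Baire property because the preimage of every set is a subunion of cells and hence has the Baire property by the very definition of a Kuratowski partition, and is continuous on no comeager set because the cells are meager; the converse, that the fibre partition of a bad $f$ is (after discarding a meager remainder) a Kuratowski partition, I would take directly from the cited equivalence. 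Consequently (2) is equivalent over ZFC to ``there is a Kuratowski partition of a complete metric space'' and (3) to the same for a Baire metric space, and the arrow $\mathrm{Con}(2)\Rightarrow\mathrm{Con}(3)$ is then immediate, since by the Baire category theorem every complete metric space is a Baire space, so any triple witnessing (2) already witnesses (3).

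For $\mathrm{Con}(1)\Rightarrow\mathrm{Con}(2)$ I would argue directly inside $\mathrm{ZFC}+$``there is a measurable cardinal $\kappa$''. Fixing a $\kappa$-complete nonprincipal ultrafilter $U$ on $\kappa$, I would build a completely metrizable space of weight $\kappa$ — the generalized Baire space $\kappa^{\omega}$ being the natural candidate — together with a partition $\{F_\alpha\colon\alpha<\kappa\}$ into meager cells indexed by $\kappa$. The $\kappa$-completeness of $U$ is exactly what forces the Kuratowski property: for each $A\subseteq\kappa$ the union $\bigcup_{\alpha\in A}F_\alpha$ is driven to be comeager or meager according as $A$ lies in $U$ or in its dual ideal, so every subunion has the Baire property. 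The associated cell-assignment map is then a Baire-property function with meager fibres continuous on no comeager set, so (2) holds in this model and $\mathrm{Con}(1)\Rightarrow\mathrm{Con}(2)$ follows.

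The decisive and hardest step is $\mathrm{Con}(3)\Rightarrow\mathrm{Con}(1)$, where one cannot produce a measurable cardinal in $V$ itself but only in an inner model. Working in $\mathrm{ZFC}+(3)$, I would extract a Kuratowski partition $\mathcal{F}=\{F_\alpha\colon\alpha<\lambda\}$ of the Baire metric space $X$ and form its K-ideal $I$, consisting of those $A\subseteq\lambda$ for which $\bigcup_{\alpha\in A}F_\alpha$ is meager. Because countable unions of meager sets are meager while $\bigcup\mathcal{F}=X$ is nonmeager, $I$ is a proper, nonprincipal, countably complete ideal containing every singleton, and the Kuratowski property yields an embedding of $P(\lambda)/I$ into the category algebra of $X$. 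The core of the argument, and the main obstacle, is to convert this completeness-and-embedding data into a genuine large cardinal: one studies the additivity of $I$ by an Ulam--Solovay analysis of the $\sigma$-complete nonprincipal ideal and then appeals to inner-model theory — in the absence of an inner model with a measurable the core model $K$ exists, and its covering properties refute the existence of such an ideal. This produces an inner model with a measurable cardinal, hence the consistency of $\mathrm{ZFC}$ with a measurable cardinal, i.e. $\mathrm{Con}(1)$. It is precisely this gap — a measurable only in a core model rather than in $V$ — that makes the theorem an equiconsistency rather than an outright equivalence.
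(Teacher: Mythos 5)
The paper offers no proof of this statement---it is quoted verbatim from Frankiewicz--Kunen \cite{FK}---so your proposal can only be judged against the known argument, whose ingredients the paper does display elsewhere (the construction of a Kuratowski partition from an $\omega_1$-complete ultrafilter on the space $X(J)$, and the localization of a K-ideal to an open set on which it is precipitous). Your overall architecture is right: reduce (2) and (3) to the existence of a Kuratowski partition, get $\mathrm{Con}(2)\Rightarrow\mathrm{Con}(3)$ from the Baire category theorem, and build a partition from an ultrafilter for the upper bound. But the decisive direction $\mathrm{Con}(3)\Rightarrow\mathrm{Con}(1)$ has a genuine gap. From the K-ideal $I$ you extract only that it is a proper, nonprincipal, countably complete ideal containing all singletons, plus an embedding of $P(\lambda)/I$ into the category algebra, and you then claim that absent an inner model with a measurable the core model's covering properties ``refute the existence of such an ideal.'' That cannot work as stated: ZFC outright proves the existence of $\sigma$-complete nonprincipal ideals containing every singleton (the bounded ideal on $\omega_1$, the nonstationary ideal), so no covering argument refutes them, and a mere Boolean embedding of the quotient into a Cohen-type algebra is likewise too weak. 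The property you must actually extract is precipitousness: one localizes to an open set $U$ such that the restricted ideal $I_{\mathcal{F}\cap U}$ is precipitous (this is the content of the paper's final localization theorem, proved via descending sequences of functionals and $I$-partitions together with the Baire category theorem applied inside $U$), and only then does the Jech--Magidor--Mitchell--Prikry theorem convert the well-founded generic ultrapower into an inner model with a measurable cardinal. Without isolating precipitousness (or, in the original \cite{FK} treatment, a dense embedding of $P(\kappa)/I$ into a Cohen algebra followed by a Gitik--Shelah-type analysis), the lower bound does not go through.

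A secondary, more repairable weakness is in $\mathrm{Con}(1)\Rightarrow\mathrm{Con}(2)$: you assert rather than construct the partition, and the assertion that ``$\kappa$-completeness is exactly what forces the Kuratowski property'' is not self-executing. The standard construction takes the closed (hence completely metrizable) subspace $X(J)=\{x\in J^{\omega}\colon \bigcap_{n}x(n)\neq\emptyset\}$ of $J^{\omega}$ with $J$ discrete, and the cells $F_{\alpha}=\{x\colon \min\bigcap_{n}x(n)=\alpha\}$; one must then verify, using $\omega_1$-completeness of $J$, that $X(J)$ is Baire, that each $F_{\alpha}$ is meager, and that $\bigcup_{\alpha\in A}F_{\alpha}$ is open modulo meager for every $A\subseteq\kappa$. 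These verifications are exactly where the ultrafilter is used, and they should not be elided.
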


\section{Kuratowski partitions in the Ellentuck structure}

A space $[\omega]^\omega$ of infinite subsets of $\omega$, equipped with the topology generated by the base consisting of the sets of the form
$[a, A] = \{B \in [A]^\omega \colon a \subset B \subseteq a \cup A\}$, where $a \in [\omega]^{<\omega}, A \in [\omega]^\omega$ is called \textit{Ellentuck space}.  We call sets of the form $[a, A]$ by \textit{Ellentuck sets} (or shortly EL-sets).

In paper \cite{FS} it is proved the following result.
\begin{theorem} [\cite{FS}]
	No non-meager subspace of Ellentuck space admits a  Kuratowski partition.
\end{theorem}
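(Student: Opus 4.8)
The plan is to argue by contradiction, converting a putative Kuratowski partition into a Baire-measurable colouring and then using the Ramsey structure of the Ellentuck topology to trap a whole EL-set inside countably many meager pieces, which violates the Baire category theorem. First I would record the two structural facts that drive everything. By the Ellentuck theorem, a set $S\subseteq[\omega]^\omega$ has the Baire property exactly when it is completely Ramsey (for every $[a,A]$ there is $B\in[A]^\omega$ with $[a,B]\subseteq S$ or $[a,B]\cap S=\varnothing$), and $S$ is nowhere dense exactly when it is completely Ramsey null; since the completely Ramsey null sets form a $\sigma$-ideal, \emph{meager coincides with nowhere dense} in this topology. Moreover every EL-set $[a,A]$ is non-meager (a meager EL-set would contain an empty sub-EL-set) and is homeomorphic to the whole space $[\varnothing,\omega]$.

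Assume toward a contradiction that a non-meager $Y$ carries a Kuratowski partition $\mathcal{F}=\{F_\alpha:\alpha<\lambda\}$. Taking the full subfamily shows $Y=\bigcup\mathcal{F}$ has the Baire property, so by Ellentuck's theorem $Y$ is comeager on some EL-set; restricting the partition to that EL-set and invoking self-similarity, I reduce to the case $Y=[\omega]^\omega$. Encode the partition by the colouring $\phi\colon[\omega]^\omega\to\lambda$ sending $C$ to the unique $\alpha$ with $C\in F_\alpha$. The Kuratowski hypothesis says precisely that $\phi^{-1}(A)=\bigcup_{\alpha\in A}F_\alpha$ has the Baire property, hence is completely Ramsey, for \emph{every} $A\subseteq\lambda$, while each fibre $\phi^{-1}(\alpha)=F_\alpha$ is nowhere dense.

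\medskip
\emph{Claim.} There is $A\in[\omega]^\omega$ such that $\phi$ takes only countably many values on $[A]^\omega$.
\medskip

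Granting the Claim the proof ends quickly: on such a cube $[A]^\omega$ is the union of countably many fibres $F_\alpha$, each nowhere dense, so $[A]^\omega$ is meager in itself, contradicting that every EL-set is a non-meager Baire space. To prove the Claim I would run a fusion (combinatorial forcing) argument that uses the completely Ramsey property of \emph{all} the colour-unions simultaneously. I build a fusion sequence $A_0\supseteq A_1\supseteq\cdots$ together with finite stems, maintaining a countable list $L$ of activated colours; at stage $n$, for each of the finitely many relevant stems $a\subseteq\{a_0,\dots,a_{n-1}\}$ I use the completely Ramsey property to shrink the current reservoir $R$ so that $\phi$ is decided on $[a,R]$ in a way that introduces at most one new colour, which is then thrown into $L$. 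The Ramsey-nullity of each fibre guarantees that no single colour can be forced to fill a sub-EL-set, so these decisions genuinely stabilise rather than collapse onto one part; since only finitely many stems are treated at each of countably many stages, $L$ stays countable, and every $C$ in the fusion limit $[A]^\omega$ has $\phi(C)\in L$.

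The main obstacle is exactly this Claim: arranging the fusion so that deciding the colour at each finite stem spends only countably many colours in total while remaining compatible across stems. Here the full strength of the Kuratowski hypothesis, namely that \emph{every} union $\phi^{-1}(A)$ (not merely the Borel ones) is completely Ramsey, is what allows the Galvin--Prikry/Ellentuck homogenisation to run for an arbitrary colouring, and the Ramsey-nullity of the fibres is what prevents the homogenised colour from being a single part. Once the bookkeeping in the fusion is in place, the initial localisation and the final Baire-category contradiction are routine.
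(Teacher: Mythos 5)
Your reduction and your ``quick ending'' are fine, but they show that the whole weight of the proof rests on the Claim, and the Claim is exactly as hard as the theorem itself: its conclusion (a cube $[A]^\omega$ covered by countably many Ramsey-null fibres) contradicts the hypotheses instantly, since Ramsey-null sets form a $\sigma$-ideal and no EL-set is Ramsey null. So nothing has been gained until the fusion actually works, and as described it does not. The step ``shrink $R$ so that $\phi$ is decided on $[a,R]$ in a way that introduces at most one new colour'' has no justification: complete Ramseyness must be applied to the preimage of some \emph{specific} set of colours. Applied to $\phi^{-1}(L)$ for the current countable list $L$, it yields either $[a,B]\subseteq\phi^{-1}(L)$ (good) or $[a,B]\cap\phi^{-1}(L)=\emptyset$; in the second case $[a,B]$ still meets uncountably many fibres and you have controlled nothing. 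Applied to a single fibre, Ramsey-nullity lets you avoid that one colour, but there are uncountably many to avoid and only countably many fusion steps. Revisiting a stem infinitely often adds at most one colour per visit and never exhausts the colours present, so a branch of the fusion limit can perfectly well land in a fibre outside $L$. This is precisely the kind of homogenisation that fails for colourings with non-separable range --- which is the whole point of Kuratowski's problem --- and it is essentially the gap the paper itself reports in the original proof from [FS].

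The paper's corrected argument (given here as the proof of the theorem about large EL-sets partitioned into RN-sets) goes the opposite way: instead of trying to make the range countable on a cube, it accepts that every EL-set meets continuum many pieces, builds by fusion a perfect set $P$ of branches witnessing this, and then runs a Bernstein-type transfinite recursion over all perfect sets meeting $[a,A]$ to select two disjoint subfamilies $\{D^0_\alpha\}$, $\{D^1_\alpha\}$ such that the corresponding unions of partition members cannot both have the Baire property (neither can be meager, since each meets every relevant perfect set, and neither can be comeager on an EL-set, since its disjoint twin also meets every perfect subset of that EL-set). If you want to salvage your outline you would need to replace the Claim by a dichotomy --- either some cube meets only countably many fibres, or a Bernstein construction applies --- and the first horn is vacuous, so you are driven back to the paper's construction. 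A secondary, fixable issue: a non-meager subspace $Y$ need not have the Baire property in $[\omega]^\omega$, and the partition's meagerness and Baire property are relative to $Y$; your localisation to an EL-set needs an argument translating these relative notions to the ambient Ellentuck space.
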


However, in the proof of this paper there is a gap but the theorem remains true.
In \cite{FJ} the result was corrected. 	

A set $Q \subset [\omega]^\omega$ is \textit{Ramsey null} (or for  short $Q$ is RN) if for every $[a, A]$ there exists $[a', A']\subseteq [a, A]$ such that $[a', A'] \cap Q = \emptyset$.
A Ramsey null set means a meager set in Ellentuck topology.
\\An Ellentuck set is \textit{large}  if is not RN.
\\The next theorem is proved in \cite{FJ}.

\begin{theorem} [\cite{FJ}]
	Let $[a, A]$ be a large EL-set. Let $\mathcal{F}$ be a partition of $[a, A]$ into pairwise disjoint RN-sets such that $\bigcup \mathcal{F}$ is large. Then $\mathcal{F}$ is not Kuratowski's partition.
\end{theorem}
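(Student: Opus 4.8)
The plan is to argue by contradiction, exploiting the combinatorial rigidity of the Ellentuck topology together with the defining property of Ramsey null sets. Suppose $\mathcal{F}$ were a Kuratowski partition of the large set $[a,A]$. Since each member of $\mathcal{F}$ is RN (hence meager in the Ellentuck topology), and since the Baire property is required for every subfamily, I would first recall the key structural feature of Ellentuck space: by the Galvin--Prikry / Ellentuck theorem, every set with the Baire property in this topology is Ramsey, i.e. for each EL-set $[b,B]$ there is $[b',B'] \subseteq [b,B]$ lying entirely inside or entirely outside the set. The strategy is to use this Ramsey property to derive a contradiction between ``$\bigcup \mathcal{F}$ is large'' and ``each $F \in \mathcal{F}$ is RN while every union $\bigcup \mathcal{F}'$ is Baire.''

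The main work proceeds in the following steps. First I would associate to the partition the induced map $\varphi \colon \bigcup \mathcal{F} \to \mathcal{F}$ sending each point to the unique block containing it; the Kuratowski-partition hypothesis says exactly that preimages of arbitrary subsets of $\mathcal{F}$ (unions of blocks) have the Baire property, so $\varphi$ is a Baire-property map in the Ellentuck sense. Second, I would invoke the Ramsey property of Baire-property sets to build, by a fusion argument, a single large EL-set $[a',A'] \subseteq [a,A]$ that is ``homogeneous'' for the partition — meaning that for every subfamily $\mathcal{F}'$, the trace $[a',A'] \cap \bigcup \mathcal{F}'$ is either Ramsey null or cofinite-in-$[a',A']$ in the Ramsey sense. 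Third, from homogeneity I would extract that $[a',A']$ meets only countably many blocks nontrivially on a large set, or else that some single block is large, either of which contradicts that all blocks are RN while their union over $[a',A']$ remains large. The fusion construction is where one diagonalizes over a base of EL-sets to stabilize the membership decisions simultaneously.

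The hard part will be the fusion itself and controlling it against the possibly large cardinality of $\mathcal{F}$. One cannot run an unrestricted fusion over all subfamilies $\mathcal{F}'$ because there are too many of them, so the real obstacle is reducing the homogeneity requirement to a countable family of conditions — presumably by working with a countable dense subcollection of Ellentuck sets (the $[b,B]$ with $b$ finite and $B$ ranging over a suitable dense set) and transferring Baire-property/Ramsey decisions from that countable skeleton to the whole partition. I would expect to need the completeness (Baire-category) of the ambient space to guarantee the fusion limit $[a',A']$ is itself large rather than RN; this is exactly where ``$\bigcup \mathcal{F}$ is large'' must be fed in, to ensure the diagonal set does not collapse to Ramsey null.

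Once homogeneity on $[a',A']$ is secured, the contradiction is quick: if every block has Ramsey-null trace on $[a',A']$ but the union of all traces equals the large set $[a',A'] \cap \bigcup \mathcal{F}$, then one uses the Ramsey property one last time to find $[a'',A''] \subseteq [a',A']$ avoiding any prescribed block, and then a counting/diagonal argument shows the blocks cannot cover $[a'',A'']$, contradicting that $\mathcal{F}$ partitions $\bigcup \mathcal{F} \supseteq [a'',A'']$. Hence no such Kuratowski partition exists, which is the assertion of the theorem.
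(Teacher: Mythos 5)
Your overall strategy --- contradiction via the Ellentuck/Galvin--Prikry theorem (Baire property implies completely Ramsey) followed by a fusion to a homogeneous large EL-set --- founders on two points, and neither is a detail that could be patched by more care. First, the homogenization step: you want a single $[a',A']$ on which \emph{every} subfamily $\mathcal{F}'\subseteq\mathcal{F}$ has RN or co-RN trace. There are $2^{|\mathcal{F}|}$ such subfamilies, and you concede this cannot be fused over directly; but your proposed fix --- reducing to a countable skeleton of basic EL-sets --- addresses the wrong parameter. The number of fusion requirements is governed by the number of subfamilies to be decided, not by the number of basic open sets, and there is no mechanism for transferring Ramsey decisions made on countably many EL-sets to uncountably many unions of blocks. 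Second, and more fundamentally, even if you obtained the homogeneous $[a',A']$, the collection $\{\mathcal{F}'\subseteq\mathcal{F} \colon [a',A']\cap\bigcup\mathcal{F}' \textrm{ is co-RN}\}$ is exactly a $\sigma$-complete nonprincipal ultrafilter on the index set of $\mathcal{F}$ (nonprincipal because each block is RN, $\sigma$-complete because RN sets form a $\sigma$-ideal). Your step three asserts this forces countably many blocks or one large block, but that is precisely what fails when $|\mathcal{F}|$ is a measurable cardinal --- this is the measurable-cardinal phenomenon the rest of the paper is about, so no ZFC contradiction can be extracted from homogeneity alone. The contradiction must instead come from combinatorics specific to $[\omega]^\omega$.

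The paper's proof supplies exactly that missing ingredient: it runs a fusion not to homogenize subfamilies but to build a perfect set $P$ of branches inside $[a,A]$, and then performs a Bernstein-type transfinite selection over the continuum many perfect sets meeting $[a,A]$, choosing at each stage two new points $D^0_\alpha, D^1_\alpha$ from distinct blocks. This produces two disjoint subfamilies whose unions each meet every perfect set in the list yet avoid points of the other; such a union can be neither RN nor comeager in any EL-set, so it fails the Baire property, contradicting the Kuratowski hypothesis. If you want to salvage your outline, replace steps two through four with a selection argument of this Bernstein type; the Ramsey-theoretic homogenization route cannot close the argument on its own.
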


\begin{proof}
	Let $[a, A]$ be a large EL-set.
	Let $\mathcal{F}$ be a partition of $[a, A]$ into RN-sets. Suppose that $\mathcal{F}$ is a Kuratowski partition.
	We will show that $\mathcal{F}_P = \{F \cap P \colon F \in \mathcal{F}\}$, where $P \subset [\omega]^\omega$ is a perfect set, has cardinality continuum.
	We construct inductively a family $\{[a_f, A_f] \colon f \in {^\omega}\{0,1\} \}$ with the following properties: for each $n < \omega$ and $h \in {^n}\{0, 1\}$ there exist $a_{h^\smallfrown \varepsilon} \supseteq a_h$ and $A_{h^\smallfrown \varepsilon} \subseteq A_h$ such that $a_{h^\smallfrown \varepsilon}\setminus a_{h}, \subseteq A$ and $\max a_h < \min A_{h^\smallfrown \varepsilon}$ for any $\varepsilon \in \{0, 1\}$.
	
	Accept the following notation $[a_h, A_h]_{P_h} = [a_h, A_h]\cap P_h$ for $h \in {^n}\{0, 1\}$.
	
	For a given $[a_h, A_h]$ enumerate all subsets of $(a_h\setminus a)\cap A$ by $s_0,..., s_k$.
	We will construct simultaneously large EL-sets
	$[a_{h^\smallfrown 0}, A_{h^\smallfrown 0}]_{P_{h^ \smallfrown 0}}$ and
	$[a_{h^\smallfrown 1}, A_{h^\smallfrown 1}]_{P_{h^ \smallfrown 1}}.$
	\\
	First we will construct the sequences
	$$B^{h}_{0} \supseteq B^{h}_{1}\supseteq .. \supseteq B^{h}_{k}
	\textrm{ and }
	C^{h}_{0} \supseteq C^{h}_{1}\supseteq .. \supseteq C^{h}_{k}$$ as follows: let $B^{h}_{0} = C^{h}_{0} = A_{h}.$
	For given sets $B^{h}_{i}, C^{h}_{j}$ if there exist $B\subseteq B^{h}_{i}$ and $C\subseteq~C^{h}_{j}$ such that there are perfect sets
	$P_{h^\smallfrown 0}, P_{h^\smallfrown 1} \in [\omega]^\omega$ with
	$P_{h^\smallfrown 0} \subseteq [(a_h \cup s_i), B]$,  $P_{h^\smallfrown 1} \subseteq [(a_h \cup s_j), C]$ and  $P_{h^\smallfrown 0} \cap P_{h^\smallfrown 1} = \emptyset$ we put $B^{h}_{i+1} := B$ and $C^{h}_{j+1} := D$. If not then $B^{h}_{i+1} = B^{h}_{i}$ and $C^{h}_{j+1} = C^{h}_{i}$.
	Let $B_{h^\smallfrown 0} = B^{h}_{k} \setminus \{\min B^{h}_{k}\}$
	and
	$C_{h^\smallfrown 0} = C^{h}_{k} \setminus \{\min C^{h}_{k}\}$.
	Now let
	$$a_{h^\smallfrown 0} = (a_h \cup s_i) \cap P_{h^\smallfrown 0}
	\textrm{ and }
	a_{h^\smallfrown 1} = (a_h \cup s_j) \cap P_{h^\smallfrown 1}$$ and
	$A_{h^\smallfrown 0} = B_{h^\smallfrown 0} \cap P_{h^\smallfrown 0}$,
	$A_{h^\smallfrown 1} = B_{h^\smallfrown 1} \cap P_{h^\smallfrown 1}$. Thus the sets $[a_{h^\smallfrown \varepsilon}, A_{h^\smallfrown \varepsilon}]$, $ \varepsilon \in \{0, 1\}$ have been defined.
	
	Let $\bigcap^{\infty}_{n = 0} [a_{f|n}, A_{f|n}]$. By Fusion Lemma, (see e.g. \cite{TJ}), we have that $\bigcup_{f \in {^\omega}\{0, 1\}} \bigcap^{\infty}_{n = 0} [a_{f|n}, A_{f|n}]$ is a perfect set. Denote it by $P$. By the construction, $[a, A]_P$ has cardinality continuum because $[a, A]$ is large.
	
	Let $\mathcal{Q} = \{P_\alpha \in [\omega]^\omega \colon P_\alpha \cap [a, A] \not = \emptyset, \alpha < 2^\omega\}$ be a family of perfect sets  which has non-empty intersection with $[a, A]$.	
	For each $P_\alpha \in \mathcal{Q}$ we choose two distinct sets $D^{0}_{\alpha}, D^{1}_{\alpha} \in [a, A]$ such that
	$$D^{0}_{\alpha}, D^{1}_{\alpha} \in \{D \in [a, A] \colon D \cap P_\alpha \not = \emptyset\} \setminus (\{D^{0}_{\beta} \colon \beta <  \alpha\} \cup \{D^{1}_{\beta} \colon \beta < \alpha\}).$$	
	For any $\varepsilon \in \{0, 1\}$ consider $\{D^{\varepsilon}_{\alpha} \colon \alpha < 2^\omega\}$. There exist $[d^\varepsilon, D^\varepsilon]$ such that $[d^\varepsilon, D^\varepsilon] = \{D^{\varepsilon}_{\alpha} \colon \alpha < 2^\omega\}$. Both such EL-sets are disjoint and their union has not the Baire property. Indeed. If $[d^\varepsilon, D^\varepsilon]$ was a meager set then there would exist $P_\alpha \in \mathcal{Q}$ such that $ [a, A]_{P_\alpha} \subset [\omega]^\omega \setminus [d^\varepsilon. D^\varepsilon], \varepsilon \in \{0, 1\}$. But we have $\{B \in [\omega]^\omega \colon B \cap [d^\varepsilon, D^\varepsilon]_{ P_\alpha} \} \not = \emptyset.$ A contradiction. 	
	If $[d^\varepsilon, D^\varepsilon], \varepsilon \in \{0, 1\}$ is not meager then there exists $P_\alpha \in \mathcal{Q}$ such that $P_\alpha \subset [d^\varepsilon, D^\varepsilon]$  but by the construction,
	$[d^{\varepsilon'}, D^{\varepsilon '}]_{P_\alpha} \not = \emptyset$ for $\varepsilon ' \not = \varepsilon $. A contradiction to $[d^0, D^0], [d^1, D^1]$  being disjoint.
\end{proof}

\section{Kuratowski partitions and K-ideals}

Let $\kappa$ is a regular cardinal.
With any Kuratowski partition
$$\mathcal{F} = \{F_\alpha \colon \alpha < \kappa\}$$
we may associate an ideal
$$I_\mathcal{F} = \{A \subset \kappa \colon \bigcup_{\alpha \in A} F_\alpha \textrm{ is meager }\}$$
which we call \textit{a $K$-ideal}.

Let $S$ be a set of a positive measure. An \textit{$I$-partition} of $S$ is a maximal family $W$ of subsets of $S$ of positive measure such that $A \cap B \in I$ for any distinct $A, B \in W$.
An $I$-partition $W_1$ of $S$ is a \textit{refinement} of an $I$-partition $W_2$ of $S$, $W_1 \leq W_2$, if every $A \in W_1$ is a subset of some $B\in W_2$.
Let $\kappa$ be a regular cardinal  and let $I$ be a $\kappa$-complete ideal on $\kappa$ containing singletons.
The ideal $I$ is \textit{precipitous} if whenever $S$ is a set of  a positive measure and $\{W_n \colon n < \omega\}$ are $I$-partitions of $S$ such that
$$W_0 \geq W_1\geq ... \geq W_n \geq ...$$
then there exists a sequence of sets
$$A_0 \supseteq A_1\supseteq ... \supseteq A_n \supseteq ...$$
such that $A_n \in W_n$ for each $n$, and $\bigcap_{n=0}^{\infty} X_n$ is nonempty, (see also \cite{TJ} p. 438-439).

Let $I^+ = \mathcal{P}(\kappa) \setminus I$.
Consider a set
$$X(I) = \{x \in ^{\omega}(I^+) \colon \bigcap\{x(n) \colon n \in \omega\} \not = \emptyset \textrm{ and } \forall_{n\in \omega} \bigcap\{x(m) \colon m < n\} \in I^+\}.$$
As was pointed out in \cite{FK} the set $X(I)$ is considered as a subset of a complete metric space $(I^+)^\omega$, where $I^+$ is equipped with the discrete topology.
In \cite{FK} the following facts were proved (see \cite{FK}, Proposition 3.1. and Theorem 3.2.).

\begin{proposition}[\cite{FK}]
	$X(I)$ is a Baire space iff $I$ is a precipitous ideal.
\end{proposition}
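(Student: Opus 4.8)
The plan is to characterize the open dense subsets of $X(I)$ combinatorially and to match them, level by level, with decreasing sequences of $I$-partitions, so that the defining clause of precipitousness (existence of a branch with nonempty intersection) turns into exactly the Baire condition (nonemptiness of a countable intersection of dense open sets inside a fixed basic open set). Throughout I would work with the clopen base $[s]\cap X(I)$ indexed by $s=\langle S_0,\dots,S_{n-1}\rangle \in (I^+)^{<\omega}$, noting that such a set is nonempty precisely when $s$ is \emph{good}, i.e. every partial intersection $b(s\restriction k)=\bigcap_{i<k}S_i$ lies in $I^+$. The ``base'' $b(s)=\bigcap_{i<n}S_i$ of a node is an honest positive set; the bases along any $x\in X(I)$ form a decreasing sequence of positive sets, and $\bigcap_k b(x\restriction k)=\bigcap_n x(n)\neq\emptyset$. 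The constant sequence $x(n)\equiv S$ shows that $[\langle S\rangle]\cap X(I)$ is nonempty for every $S\in I^+$.

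For the direction ``$I$ precipitous $\Rightarrow$ $X(I)$ Baire'', I would fix a countable family $\{D_n\}$ of dense open sets and a good $s$ with base $S$, then build by recursion on $n$ a decreasing chain of $I$-partitions $W_0\ge W_1\ge\cdots$ of $S$ together with, for each $A\in W_n$, a good node $t_A\supseteq s$ with $b(t_A)=A$ and $[t_A]\cap X(I)\subseteq D_0\cap\cdots\cap D_n$, the nodes chosen nested along successive refinements. Here density of $D_n$ guarantees that below any positive set one can refine into $D_n$, which is exactly what makes the constructed almost-disjoint family \emph{maximal}, hence a genuine $I$-partition. Precipitousness then yields $A_0\supseteq A_1\supseteq\cdots$ with $A_n\in W_n$ and $\bigcap_n A_n\neq\emptyset$; the associated nested nodes union to a point $x\in X(I)$ with $x\in[s]$ and $x\in\bigcap_n D_n$, since $\bigcap_n x(n)=\bigcap_n A_n\neq\emptyset$. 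Thus $\bigcap_n D_n$ meets every basic open set, and $X(I)$ is Baire.

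For the converse, assuming $I$ is not precipitous I would take a witness: $S\in I^+$ and $I$-partitions $W_0\ge W_1\ge\cdots$ of $S$ admitting no branch $A_0\supseteq A_1\supseteq\cdots$ (with $A_n\in W_n$) of nonempty intersection. I would set $D_n=\{x\in X(I): \exists k\ b(x\restriction k)\subseteq^* A \text{ for some } A\in W_n\}$. Each $D_n$ is open, being witnessed by a finite initial segment, and dense: maximality of $W_n$ means every positive set meets some member of $W_n$ positively, so any good node $t$ with base $B$ can be extended by a suitable $A\in W_n$ (with $B\cap A\in I^+$), putting the whole cone into $D_n$. Any $x\in[\langle S\rangle]\cap\bigcap_n D_n$ would, using that the bases of $x$ decrease and the $W_n$ refine, produce a decreasing branch through the $W_n$ whose intersection contains $\bigcap_n x(n)\neq\emptyset$, contradicting the choice of witness. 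Hence $\bigcap_n D_n$ avoids the nonempty open set $[\langle S\rangle]\cap X(I)$, and $X(I)$ is not Baire.

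The main obstacle I anticipate is reconciling the honest set-theoretic containments among the bases $b(x\restriction k)$ with the ``modulo $I$'' nature of $I$-partitions: members of each $W_n$ are only almost disjoint and maximal modulo $I$, so extracting an \emph{honestly} decreasing branch $A_0\supseteq A_1\supseteq\cdots$, and arguing that the realized point in the final intersection is not destroyed by the small $I$-errors, will require repeatedly thinning sets by discarding $I$-members and invoking the $\kappa$-completeness of $I$ (together with the fact that $I$ contains singletons, so positive sets are nonempty and the witnesses persist). The fusion-style bookkeeping for the nested choice of the nodes $t_A$ in the first direction is the other delicate point, but it becomes routine once the maximality argument producing the $W_n$ is in place.
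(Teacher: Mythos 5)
The paper itself gives no proof of this proposition --- it is quoted verbatim from \cite{FK} (Proposition 3.1 there) --- so there is no in-text argument to compare against; judged on its own, your outline is the standard one and the forward direction (precipitous $\Rightarrow$ Baire) is essentially complete. Density of $D_{n+1}$ lets you seed, below each $A\in W_n$, a dense family of bases of nodes whose cones lie in $D_0\cap\cdots\cap D_{n+1}$; a maximal almost disjoint subfamily of a dense family is a maximal antichain, so the $W_n$ are genuine $I$-partitions refining one another; and the branch supplied by precipitousness forces the associated nodes to be nested, since a positive $A_{n+1}$ can lie below only one member of the almost disjoint family $W_n$.

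Two steps in the converse, as written, would fail and need repair. First, your $D_n$ is not dense: a good node $t$ whose base $B$ satisfies $B\cap S\in I$ need not admit any extension with base almost contained in a member of $W_n$, since the members of $W_n$ live inside $S$. Either add the disjunct ``$b(x\restriction k)\cap S\in I$'' to the definition of $D_n$, or work in the open subspace $[\langle S\rangle]\cap X(I)$ and use that open subspaces of Baire spaces are Baire. Second, the $\subseteq^*$ is genuinely dangerous, and the remedy you anticipate (discarding $I$-small pieces, $\kappa$-completeness) does not resolve it: from $b(x\restriction k_n)\subseteq^* A_n$ one does recover an honestly decreasing branch $A_0\supseteq A_1\supseteq\cdots$, but its intersection contains $\bigcap_n x(n)$ only modulo the union of the errors $b(x\restriction k_n)\setminus A_n\in I$, and $\bigcap_n x(n)$ is guaranteed merely to be \emph{nonempty}, not positive, so it can be swallowed entirely by those errors. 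The correct fix is to make the containments honest from the outset: when extending a node with base $B$ by a member $A\in W_n$ with $B\cap A\in I^+$, append the set $B\cap A$ (or $A$) itself, so the new base is literally a subset of $A$; then $\bigcap_n A_n\supseteq\bigcap_n b(x\restriction k_n)=\bigcap_n x(n)\neq\emptyset$, contradicting non-precipitousness directly. With these two local changes your argument is correct.
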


\begin{theorem}[\cite{FK}]
	Let $I$ a precipitous ideal on some regular cardinal. Then there is a Kuratowski partition of the metric Baire space $X(I)$.
\end{theorem}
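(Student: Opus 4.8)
The plan is to exhibit an explicit partition of $X(I)$ and to verify directly the two requirements in the definition of a Kuratowski partition, invoking precipitousness only through the preceding Proposition, which guarantees that $X(I)$ is Baire. Every $x\in X(I)$ satisfies $\bigcap_{n\in\omega}x(n)\neq\emptyset$, so I would define a map $f\colon X(I)\to\kappa$ by $f(x)=\min\bigcap_{n\in\omega}x(n)$ and set $F_\alpha=f^{-1}(\{\alpha\})$. Since $f$ is total, $\mathcal F=\{F_\alpha\colon\alpha<\kappa\}$ is automatically a partition of $X(I)$, and what remains is to show that each $F_\alpha$ is meager and that $\bigcup_{\alpha\in A}F_\alpha=f^{-1}(A)$ has the Baire property for every $A\subseteq\kappa$.

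It is convenient to index the basic open sets by nodes: a node is a finite sequence $s=\langle s_0,\dots,s_{n-1}\rangle$ with each $s_i\in I^+$ and $T_s:=\bigcap_{i<n}s_i\in I^+$, and $[s]$ denotes the corresponding basic open subset of $X(I)$. Two elementary facts drive everything. First, if $B\in I$ then $T_s\setminus B\in I^+$ (otherwise $T_s=(T_s\setminus B)\cup(T_s\cap B)\in I$), so $s$ may be extended by the single term $T_s\setminus B$; likewise one may extend by $T_s\cap A$ whenever $T_s\cap A\in I^+$. Second, every $x\in[s]$ has $\bigcap_n x(n)\subseteq T_s$, hence $f(x)\in T_s$. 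For meagerness, fix $\alpha$ and a node $s$; since $\{\alpha\}\in I$, the first fact lets me extend $s$ by $T_s\setminus\{\alpha\}$, and no branch through the resulting subnode keeps $\alpha$ in its total intersection, so that subnode misses $F_\alpha$. Thus each $F_\alpha$ is nowhere dense, and because the preceding Proposition makes $X(I)$ Baire, these pieces are genuinely small.

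The \emph{main point} is the Baire property of the unions. Fixing $A\subseteq\kappa$, I would set $U_A=\bigcup\{[s]\colon T_s\subseteq A\}$, which is open and, by the second fact, satisfies $U_A\subseteq f^{-1}(A)$. The claim to establish is that $f^{-1}(A)\setminus U_A$ is nowhere dense; granting this, the symmetric difference $f^{-1}(A)\,\triangle\,U_A$ is nowhere dense and so $f^{-1}(A)$ has the Baire property. Given an arbitrary node $s$, I split into three cases according to the position of $A$ relative to $T_s$: if $T_s\subseteq A$ then $[s]\subseteq U_A$; if $T_s\cap A\in I$ then extending $s$ by $T_s\setminus A$ yields a subnode on which $f$ avoids $A$ entirely, so it misses $f^{-1}(A)$; and if $T_s\cap A\in I^+$ then extending $s$ by $T_s\cap A$ yields a subnode $s'$ with $T_{s'}=T_s\cap A\subseteq A$, hence $[s']\subseteq U_A$. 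In every case a subnode disjoint from $f^{-1}(A)\setminus U_A$ is produced, which proves nowhere density.

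I expect the third case to be the only delicate step: the naive hope that $f^{-1}(A)$ is always meager or comeager is false, and the device that makes the argument go through is to compare $f^{-1}(A)$ with its open core $U_A$ and to exploit the two-valued dichotomy $T_s\cap A\in I$ versus $T_s\cap A\in I^+$. Finally I would emphasize that, apart from the appeal to the preceding Proposition to know that $X(I)$ is a Baire space, the entire verification is soft and uses only that $I$ is an ideal containing singletons; precipitousness enters solely to ensure that the meager pieces $F_\alpha$ are small in a nontrivial space, so that $\mathcal F$ is a genuine Kuratowski partition of $X(I)$.
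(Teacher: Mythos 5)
Your proposal is correct and follows essentially the same route as the source the paper cites for this theorem (the paper itself states it without proof, referring to Frankiewicz--Kunen): one partitions $X(I)$ via $f(x)=\min\bigcap_{n}x(n)$, checks each fiber is nowhere dense by shrinking $T_s$ off a singleton, and shows $f^{-1}(A)$ differs from its open core $\bigcup\{[s]\colon T_s\subseteq A\}$ by a nowhere dense set using the dichotomy $T_s\cap A\in I$ versus $T_s\cap A\in I^{+}$. Your closing remark is also the right diagnosis: precipitousness is used only through the preceding Proposition to ensure $X(I)$ is a Baire space, so that the partition is not vacuously Kuratowski.
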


The following proposition is proved in \cite{JW}.

\begin{proposition} [\cite{JW}]
	Let $2^\omega = 2^{\omega_1}$.
	Then there exists a metric Baire space having a Kuratowski partition for which a completion does not have a Kuratowski partition.
\end{proposition}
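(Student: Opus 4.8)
The plan is to use the space $X(I)$ introduced just before Proposition~2, taken for a precipitous ideal $I$ on $\omega_1$, and to show that the cardinal hypothesis drives its completion into the scope of the \v Cech-complete result of \cite{EFK}. The construction presupposes a precipitous ideal $I$ on $\omega_1$: this is the standing hypothesis under which Baire metric spaces carry Kuratowski partitions at all (cf. the equiconsistency of \cite{FK}), and it is the case $\kappa = \omega_1$ that is matched by the arithmetic assumption $2^\omega = 2^{\omega_1}$. So I fix such an $I$ and set $X = X(I) \subseteq (I^+)^\omega$, where $I^+$ carries the discrete topology.

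The first half is immediate from the two results of \cite{FK} recalled just above. Since $I$ is precipitous, $X(I)$ is a Baire space, and it carries a Kuratowski partition; being a subspace of the metric space $(I^+)^\omega$ it is metrizable. Thus $X(I)$ is already a metric Baire space with a Kuratowski partition, and no further work is needed here.

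For the completion I would argue as follows. The product $(I^+)^\omega$, with $I^+$ discrete, is a complete metric space, and $X(I)$ is one of its subspaces; hence a completion $\widehat{X}$ of $X(I)$ is realized as its closure $\overline{X(I)}$ in $(I^+)^\omega$. Being closed in a complete metric space, $\widehat{X}$ is itself complete metric, and therefore \v Cech complete. Now $w\bigl((I^+)^\omega\bigr) = |I^+| \leq 2^{\omega_1}$, and weight is monotone under passage to subspaces, so $w(\widehat{X}) \leq 2^{\omega_1}$; since $\pi$-weight and weight coincide for metric spaces, $\widehat{X}$ admits a pseudobase of cardinality at most $2^{\omega_1}$, which equals $2^\omega$ by hypothesis. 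Applying the theorem of \cite{EFK} on \v Cech-complete spaces with a pseudobase of cardinality $\leq 2^\omega$, every partition of $\widehat{X}$ into meager sets has a subfamily whose union fails the Baire property; that is, $\widehat{X}$ has no Kuratowski partition, which completes the desired contrast.

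The main obstacle I anticipate is the weight estimate for the completion: one has to be sure that replacing $X(I)$ by its closure does not push the pseudobase past $2^{\omega_1}$, which is exactly where monotonicity of weight under subspaces and the equality $\pi w = w$ for metric spaces do the work, and it is the sole point at which the cardinal arithmetic $2^\omega = 2^{\omega_1}$ enters. The remaining ingredients — completeness, and hence \v Cech completeness, of $(I^+)^\omega$, together with the realization of the completion as $\overline{X(I)}$ — are routine, so the whole argument should reduce to this cardinality bookkeeping combined with the two cited results of \cite{FK}.
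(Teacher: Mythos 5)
The paper itself gives no proof of this proposition --- it is quoted from \cite{JW} without argument --- so there is nothing in the text to compare your write-up against; I can only judge it on its own terms. On those terms the architecture is sound and is very plausibly what \cite{JW} intends, given that the proposition sits immediately after the two $X(I)$ results of \cite{FK}: $X(I)$ is metrizable as a subspace of $(I^+)^\omega$; it is Baire and carries a Kuratowski partition by the two cited results; its completion can be realized as the closure $\overline{X(I)}$ in the complete metric space $(I^+)^\omega$ and is therefore \v Cech complete; and the estimate $w(\overline{X(I)}) \le w((I^+)^\omega) = |I^+| \le 2^{\omega_1} = 2^\omega$, together with the coincidence of weight and $\pi$-weight for metric spaces, puts the completion squarely under the theorem of \cite{EFK}, which destroys every candidate Kuratowski partition there. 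Each of these steps checks out, and you are right that the cardinal arithmetic enters only in the pseudobase bound.

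The one genuine issue is the hypothesis you add at the outset: a precipitous ideal on $\omega_1$. The proposition as printed assumes only $2^\omega = 2^{\omega_1}$, and from that assumption alone no space $X(I)$ with the required properties is available --- nor could any argument supply one, since by the paper's final theorem (from \cite{FJ3}) the mere existence of a metric Baire space with a Kuratowski partition already yields a measurable cardinal, so the literal statement cannot be a theorem of ZFC together with $2^\omega = 2^{\omega_1}$ alone. Some large-cardinal-type hypothesis must therefore be implicit in the formulation, and yours (a precipitous ideal on $\omega_1$) is the natural choice in view of the surrounding results; but you should say explicitly that what you prove is the proposition augmented by that hypothesis, not the statement as written. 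With that caveat made explicit I see no further gaps.
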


\noindent
Moreover in \cite{JW} it is shown  that a $K$-ideal  is not necessarily precipitous. Before it we show two related results. (See \cite{RE} p. 103 for the definition of a direct sum of spaces).

\begin{proposition}[\cite{JW}]
	Let $\kappa$ be a regular cardinal.
	Let $X$ be a space having a Kuratowski partition of cardinality $\kappa$ and let $\Pi$ be a family of all permutations of $\kappa$. Then the direct sum   $\oplus_{\pi \in \Pi} X_\pi$ has a Kuratowski partition.
\end{proposition}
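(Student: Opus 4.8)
The plan is to reduce everything to the componentwise behaviour of meagerness and the Baire property in a topological direct sum. Write $\mathcal{F} = \{F_\alpha : \alpha < \kappa\}$ for the given Kuratowski partition of $X$, and for each permutation $\pi \in \Pi$ let $X_\pi$ denote the corresponding copy of $X$ inside $\bigoplus_{\pi \in \Pi} X_\pi$, with $F^\pi_\beta \subseteq X_\pi$ the copy of $F_\beta$. Since each $X_\pi$ is clopen in the direct sum, closures and interiors are computed componentwise, so the first step is to record the two equivalences that drive the argument: (i) a set $M$ is meager in $\bigoplus_\pi X_\pi$ iff $M \cap X_\pi$ is meager in $X_\pi$ for every $\pi$; and (ii) a set $A$ has the Baire property in $\bigoplus_\pi X_\pi$ iff $A \cap X_\pi$ has the Baire property in $X_\pi$ for every $\pi$.

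I expect the only genuinely nontrivial point to be the hard direction of (i): passing from componentwise meagerness to global meagerness when the index set $\Pi$ has size $2^\kappa$, since a union of witnesses over that many components is not a priori a countable union of nowhere dense sets. The trick is to write each $M \cap X_\pi = \bigcup_{n} N^\pi_n$ with $N^\pi_n$ nowhere dense in $X_\pi$, and then to set $P_n = \bigcup_\pi N^\pi_n$. Because nowhere-denseness is itself a componentwise property in a direct sum, each $P_n$ is nowhere dense in $\bigoplus_\pi X_\pi$, so $M = \bigcup_n P_n$ is meager despite the large number of components. This is the step where the whole scheme could break if the characterization failed, so I would isolate it carefully. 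Equivalence (ii) then follows formally: writing $A \cap X_\pi = U_\pi \triangle M_\pi$ with $U_\pi$ open and $M_\pi$ meager, disjointness of the $X_\pi$ gives $A = (\bigcup_\pi U_\pi) \triangle (\bigcup_\pi M_\pi)$, the symmetric difference of an open set with a meager set by (i).

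Next I would define the candidate partition, using the permutations to twist the labels: for $\alpha < \kappa$ put $G_\alpha = \bigcup_{\pi \in \Pi} F^\pi_{\pi(\alpha)}$ and set $\mathcal{G} = \{G_\alpha : \alpha < \kappa\}$. For a fixed $\pi$, as $\alpha$ runs over $\kappa$ the index $\pi(\alpha)$ also runs over all of $\kappa$, so $\{G_\alpha \cap X_\pi : \alpha < \kappa\} = \{F^\pi_\beta : \beta < \kappa\}$ is exactly the copied partition of $X_\pi$; hence $\mathcal{G}$ is a partition of the direct sum. Moreover $G_\alpha \cap X_\pi = F^\pi_{\pi(\alpha)}$ is meager in $X_\pi$, so by (i) each $G_\alpha$ is meager.

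Finally, for the Kuratowski condition I would take any $A \subseteq \kappa$ and look at $\bigcup_{\alpha \in A} G_\alpha$. Restricting to $X_\pi$ and reindexing by $\beta = \pi(\alpha)$ gives $(\bigcup_{\alpha \in A} G_\alpha) \cap X_\pi = \bigcup_{\beta \in \pi[A]} F^\pi_\beta$, which is the copy inside $X_\pi$ of the set $\bigcup_{\beta \in \pi[A]} F_\beta \subseteq X$. Since $\mathcal{F}$ is a Kuratowski partition, this set has the Baire property in $X$, hence its copy has the Baire property in $X_\pi$; as this holds for every $\pi$, equivalence (ii) yields that $\bigcup_{\alpha \in A} G_\alpha$ has the Baire property, so $\mathcal{G}$ is a Kuratowski partition of $\bigoplus_{\pi \in \Pi} X_\pi$. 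As a byproduct one sees that its associated $K$-ideal is $I_\mathcal{G} = \{A \subseteq \kappa : \pi[A] \in I_\mathcal{F} \text{ for all } \pi \in \Pi\}$, the largest permutation-invariant ideal contained in $I_\mathcal{F}$; this is presumably the symmetric structure required for the subsequent non-precipitousness argument, which explains why the construction is phrased through the permutations rather than a single copy.
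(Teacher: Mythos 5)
The paper itself gives no proof of this proposition (it is quoted from \cite{JW}), but the surrounding discussion --- a partition $\mathcal{F}^*$ of $\oplus_{\pi\in\Pi}X_\pi$ of cardinality $\kappa$ whose $K$-ideal is the Fr\'echet ideal --- identifies the intended construction, and your permutation-twisted partition $G_\alpha=\bigcup_\pi F^\pi_{\pi(\alpha)}$ is exactly that: your computation $I_{\mathcal{G}}=\{A\subseteq\kappa:\pi[A]\in I_{\mathcal{F}}\ \text{for all}\ \pi\}$ reduces to $[\kappa]^{<\kappa}$ precisely as Theorem 19 asserts. Your componentwise reductions of meagerness and the Baire property to the clopen summands are correct, so the argument is sound and essentially the same as the one intended.
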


\noindent
With the Kuratowski partition $\mathcal{F}^*$ of a direct sum $\oplus_{\pi \in \Pi} X_\pi$ of copies of $X$  considered in the proof of Proposition 18 we may assiociate a $K$-ideal $I_{\mathcal{F}^*}$. In Theorem 19 we show that such an ideal is a Frech\'et ideal,  (i.e. $I_{\mathcal{F}} = [\kappa]^{<\kappa}$).

\begin{theorem}[\cite{JW}]
	Let $\kappa$ be a regular cardinal.
	Let $X$ be a space having a Kuratowski partition $\mathcal{F}$ of cardinality $\kappa$. Then a $K$-ideal $I_{\mathcal{F}^*}$ of is a Frech\'et ideal.	
\end{theorem}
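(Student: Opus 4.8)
The plan is to read the ideal $I_{\mathcal{F}^*}$ directly off the partition $\mathcal{F}^*$ constructed in Proposition 18 and then pin it down using the full symmetry provided by $\Pi$. Recall that in the direct sum $\oplus_{\pi\in\Pi}X_\pi$ the $\beta$-th piece of $\mathcal{F}^*$ is the union, over all $\pi\in\Pi$, of the copy of $F_{\pi(\beta)}$ sitting inside $X_\pi$; hence for $A\subseteq\kappa$ the set $\bigcup_{\beta\in A}F^*_\beta$ meets the summand $X_\pi$ exactly in the copy of $\bigcup_{\alpha\in\pi(A)}F_\alpha$. Since a subset of a direct sum is meager if and only if its trace on every summand is meager, I would first establish the identity
\[
I_{\mathcal{F}^*}=\{A\subseteq\kappa:\ \pi(A)\in I_{\mathcal{F}}\ \text{for every }\pi\in\Pi\}.
\]
In other words $I_{\mathcal{F}^*}$ is the largest permutation-invariant ideal contained in $I_{\mathcal{F}}$; and because $\Pi$ is the \emph{full} symmetric group of $\kappa$, the orbit of any $A$ consists of all subsets of $\kappa$ with the same cardinality and co-cardinality as $A$.

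Granting this reduction, the theorem splits into two inclusions. For $[\kappa]^{<\kappa}\subseteq I_{\mathcal{F}^*}$ I would argue that each $F_\alpha$ is meager, so every singleton lies in $I_{\mathcal{F}}$, and since the $K$-ideal of a Kuratowski partition of a Baire space is $\kappa$-complete, every $B\in[\kappa]^{<\kappa}$ lies in $I_{\mathcal{F}}$. As permutations preserve cardinality, for $|A|<\kappa$ every image $\pi(A)$ again has size $<\kappa$ and hence belongs to $I_{\mathcal{F}}$; by the displayed identity $A\in I_{\mathcal{F}^*}$.

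For the reverse inclusion $I_{\mathcal{F}^*}\subseteq[\kappa]^{<\kappa}$, suppose $|A|=\kappa$. Then $|\kappa\setminus A|\le\kappa=|A|$, so I can choose $\pi\in\Pi$ with $\pi(A)\supseteq\kappa\setminus A$, whence $A\cup\pi(A)=\kappa$. If $A$ were in $I_{\mathcal{F}^*}$, then both $A$ and $\pi(A)$ would lie in $I_{\mathcal{F}}$, and since $I_{\mathcal{F}}$ is an ideal we would obtain $\kappa\in I_{\mathcal{F}}$, i.e. $X=\bigcup\mathcal{F}$ meager, contradicting that $X$ is a Baire space. Hence no set of size $\kappa$ belongs to $I_{\mathcal{F}^*}$, which together with the first inclusion yields $I_{\mathcal{F}^*}=[\kappa]^{<\kappa}$.

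The bookkeeping here, namely the summand-wise description of meagerness in a direct sum and the observation that the permutation images of a fixed set of size $<\kappa$ exhaust all sets of that size, is routine. The one genuinely load-bearing input is the $\kappa$-completeness of the original $K$-ideal $I_{\mathcal{F}}$, which drives the first inclusion: for $\kappa=\omega_1$ it is automatic from the $\sigma$-completeness of the meager ideal, while for larger $\kappa$ it is exactly the point at which the strength of the Kuratowski-partition hypothesis (and its link with measurable cardinals) is used. I expect this step, rather than the symmetric covering argument behind the second inclusion, to be the main obstacle.
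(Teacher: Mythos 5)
The paper itself gives no proof of this theorem — it is quoted from \cite{JW}, and even the construction of $\mathcal{F}^*$ is only alluded to via the preceding direct-sum proposition — so there is nothing in the text to compare your argument against line by line; I can only assess it on its own terms. Your reading of $\mathcal{F}^*$ (the $\beta$-th piece collects the copy of $F_{\pi(\beta)}$ from the summand $X_\pi$) is the natural one, the identity $I_{\mathcal{F}^*}=\{A\subseteq\kappa:\pi(A)\in I_{\mathcal{F}}\text{ for all }\pi\in\Pi\}$ does follow from the summand-wise characterization of meagerness in a topological direct sum, and the covering argument for $I_{\mathcal{F}^*}\subseteq[\kappa]^{<\kappa}$ (given $|A|=\kappa$, pick $\pi$ with $A\cup\pi(A)=\kappa$ and use that a Baire space is not meager in itself) is sound; this symmetrization step is exactly the content the surrounding text ascribes to the theorem, namely that passing to the direct sum ``reduces the ideal up to the Frech\'et ideal.''

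The gap is the other inclusion. You correctly reduce $[\kappa]^{<\kappa}\subseteq I_{\mathcal{F}^*}$ to $[\kappa]^{<\kappa}\subseteq I_{\mathcal{F}}$, i.e.\ to $\kappa$-completeness of $I_{\mathcal{F}}$, and then you assert this as a known property of $K$-ideals. It is not a consequence of the definitions for an arbitrary Kuratowski partition of cardinality $\kappa$: if $X=X_1\oplus X_2$ where $X_1$ carries a Kuratowski partition of size $\omega_1$ and $X_2$ one of size $\kappa>\omega_1$, then amalgamating them indexwise gives a Kuratowski partition of $X$ of cardinality $\kappa$ in which the union of the first $\omega_1$ pieces covers $X_1$ and is therefore non-meager; so $I_{\mathcal{F}}$ — and hence $I_{\mathcal{F}^*}$, via the identity permutation — omits a set of size $\omega_1<\kappa$, and the conclusion fails. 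The theorem therefore tacitly assumes that $\kappa$ is minimal (after localizing to a suitable open set), exactly as the paper does explicitly at the start of its proof of the precipitousness theorem, where $\kappa=\min\{|\mathcal{F}|\colon\mathcal{F}\text{ is a Kuratowski partition of }X\}$ is fixed; and even then the $\kappa$-completeness of $I_{\mathcal{F}}$ is the Frankiewicz--Kunen localization lemma from \cite{FK}, not an immediate observation. You flag this step as the load-bearing one, which is the right instinct, but as written it is the one place where your proof does not go through: it must either be derived from a minimality/localization hypothesis or imported as an explicit assumption on $\mathcal{F}$.
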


\noindent
The following lemma is proved in \cite{TJ}, (Lemma 35.9 p. 440).

\begin{lemma} [\cite{TJ}]
	Let $\kappa$ be a regular uncountable cardinal. The ideal $I = \{X \subseteq \kappa \colon |X| < \kappa\}$ is not precipitous.
\end{lemma}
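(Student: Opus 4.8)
My plan is to work with $S=\kappa$ and to exploit the single structural feature of a regular uncountable $\kappa$: a union of fewer than $\kappa$ bounded subsets of $\kappa$ is again bounded, so that $I^+=\mathcal P(\kappa)\setminus I$ is exactly the family of unbounded (equivalently, size $\kappa$) sets. The substantive reading of the precipitousness criterion is that a thread is only useful when its intersection stays $I$-positive; equivalently, by the standard theory of precipitous ideals (\cite{TJ}, pp.~438--439), $I$ is precipitous iff after forcing with $\mathcal P(\kappa)/I$ the generic ultrapower is well-founded. So my task is to produce one refining tower $W_0\ge W_1\ge\cdots$ of $I$-partitions of $\kappa$ that admits no thread with $I$-positive intersection.

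First I would build a $\kappa$-branching tree of positive sets. Fixing a bijection $\kappa\cong{}^{<\omega}\kappa$ I assign to each $\alpha<\kappa$ a branch $b_\alpha\in{}^{\omega}\kappa$ (the code of $\alpha$ padded with zeros), and for $s\in{}^{<\omega}\kappa$ I set $Q_s=\{\alpha<\kappa:b_\alpha\supseteq s\}$. Then $Q_{\langle\rangle}=\kappa$, each realized $Q_s$ has size $\kappa$, $Q_s$ is the disjoint union of its $\kappa$ many children $Q_{s^\frown\xi}$, and level $n+1$ literally refines level $n$. The point of the coding is that the map $\alpha\mapsto b_\alpha$ is injective, so for every branch $b\in{}^{\omega}\kappa$ the branch-intersection $\bigcap_n Q_{b\restriction n}$ contains at most one ordinal and is therefore bounded, i.e. in $I$.

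Next I would let $W_n$ be a maximal almost disjoint (mod $I$) family extending the $n$-th level $\{Q_s:s\in{}^{n}\kappa\}$, so that each $W_n$ is a genuine $I$-partition. Any thread that is eventually absorbed into the tree then has intersection contained in a single branch, hence bounded and not $I$-positive. The remaining and genuinely delicate issue is that a disjoint family of $\kappa$ many positive sets is never maximal: one may always adjoin a transversal meeting each piece in a bounded set. Thus each $W_n$ must be completed by such transversal-type sets, and the core of the argument is to show that this completion introduces no thread whose intersection is unbounded.

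The step I expect to be the main obstacle is exactly this maximality bookkeeping: choosing the transversals coherently across the levels and proving, via the regularity of $\kappa$, that no chain of them produces an $I$-positive intersection. The heuristic is that a coherent thread running through the adjoined transversals refines the $\kappa$-branching tree and is therefore pinned to fewer and fewer branches, so that at each stage its running intersection is a $(<\kappa)$-union of bounded sets and hence bounded; consequently $\bigcap_n A_n\in I$ for every thread, and $I$ is not precipitous. In the dual, ultrapower language this is the assertion that the generic branch through the tree avoids every ground-model point-branch $b_\alpha$, so that the decreasing sequence $\langle Q_{b\restriction n}\rangle$ of $U$-large sets has empty intersection; the subtle part there is that, because $U$ is $V$-$\kappa$-complete, the resulting ill-foundedness cannot come from iterating any single ground-model function and must instead be read off the sequence $\langle Q_{b\restriction n}\rangle$, which lives only in the generic extension.
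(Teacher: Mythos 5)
First, a caveat: the paper gives no proof of this lemma at all --- it is quoted from Jech (Lemma 35.9, p.~440 of \cite{TJ}) --- so your attempt can only be measured against the standard argument in the cited source. Your skeleton (a $\kappa$-branching tree of $I$-positive sets whose levels are to serve as the $I$-partitions $W_n$ and whose branches have small intersection) is the right one, and you correctly locate the crux: the levels of such a tree are \emph{not} maximal almost disjoint families, hence not $I$-partitions in the sense the definition requires. But you do not close that gap, and the heuristic you offer for closing it is false. Concretely, with your coding pick for each $\xi<\kappa$ an ordinal $x_\xi$ whose branch is $\langle \xi,\xi,0,0,\dots\rangle$ and let $X=\{x_\xi\colon\xi<\kappa\}$. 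Then $X$ is $I$-positive but meets every node $Q_s$ with $|s|\ge 1$ in at most one point, so $X$ is almost disjoint from the \emph{entire} tree from level $1$ on. Nothing prevents the maximal extensions $W_n$ from all containing $X$ (one may build each level's maximal completion starting from $X$), and then the constant thread $A_n=X$ has $I$-positive, in particular nonempty, intersection, so the sequence $\langle W_n\rangle$ witnesses nothing. Your claim that a thread through the adjoined sets has running intersection equal to ``a $(<\kappa)$-union of bounded sets'' is exactly what this example refutes: $X$ is the union of $\kappa$ many singletons $X\cap Q_s$, $s\in\kappa^n$, and regularity gives no control over a union of $\kappa$ many bounded sets. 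So the proof is incomplete at precisely the step you yourself flag as the main obstacle, and it is not bookkeeping: the completion to maximal families has to be organized by a genuinely different device.

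Two further points. The definition of precipitousness used in the paper (and in \cite{TJ}) requires, for non-precipitousness, a decreasing sequence of $I$-partitions \emph{every} thread of which has \emph{empty} intersection; you only aim for $\bigcap_n A_n\in I$, and even that fails as stated, because the padding map $t\mapsto t^\smallfrown\langle 0,0,\dots\rangle$ is not injective ($t$ and $t^\smallfrown 0$ receive the same branch), so $\bigcap_n Q_{f|n}$ is a countably infinite set whenever $f$ is eventually zero. The standard remedy is to identify $\kappa$ with $\kappa^{<\omega}$ and work with the cones $A_s=\{t\in\kappa^{<\omega}\colon s\subseteq t\}$: every point $t$ is expelled at level $|t|+1$, so every branch has literally empty intersection. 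Passing from ``every thread has intersection in $I$'' to ``$I$ is not precipitous'' would in any case require a separate argument, since that is not the stated criterion. As it stands the proposal is a plan with the decisive step missing; I would either reproduce Jech's argument in full or leave the lemma as a citation, as the paper does.
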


\noindent
Using Lemma 20 we immediately obtain the next corollary.

\begin{corollary}[\cite{JW}]
	Let $X$ be a space having a Kuratowski partition. Then the $K-$ideal can not to be precipitous.		
\end{corollary}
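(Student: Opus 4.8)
The plan is to obtain the corollary by chaining Theorem 19 with Lemma 20, so that the whole argument reduces to identifying the relevant $K$-ideal with a Fr\'echet ideal and then quoting that such ideals fail to be precipitous. First I would fix a space $X$ carrying a Kuratowski partition $\mathcal{F} = \{F_\alpha : \alpha < \kappa\}$ and check that $\kappa$ is a regular uncountable cardinal. Regularity is part of the standing setup of this section, since the $K$-ideal is taken to be $\kappa$-complete on a regular $\kappa$; uncountability follows because a Baire space is not a countable union of meager sets, so no Kuratowski partition can be countable --- indeed, by the equiconsistency remark preceding this section, $\kappa$ is at least the first measurable cardinal, hence in particular regular and uncountable.

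Next I would pass to the direct sum $\oplus_{\pi \in \Pi} X_\pi$ over the family $\Pi$ of all permutations of $\kappa$. By Proposition 18 this space again carries a Kuratowski partition $\mathcal{F}^*$, and Theorem 19 identifies its $K$-ideal precisely as $I_{\mathcal{F}^*} = [\kappa]^{<\kappa} = \{A \subseteq \kappa : |A| < \kappa\}$, the Fr\'echet ideal on $\kappa$. Since $\kappa$ is regular and uncountable, Lemma 20 applies verbatim and shows that $[\kappa]^{<\kappa}$ is not precipitous; therefore $I_{\mathcal{F}^*}$ is not precipitous, which is exactly the assertion that a $K$-ideal coming from a Kuratowski partition cannot be precipitous.

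The main obstacle here is bookkeeping rather than genuine mathematical difficulty. One must be sure that the hypotheses of Lemma 20 --- regularity and, crucially, uncountability of $\kappa$ --- are actually verified, and one must read the corollary correctly: the ``$K$-ideal'' whose non-precipitousness is asserted is the ideal $I_{\mathcal{F}^*}$ delivered by the direct-sum construction, for it is that ideal, and not an arbitrary $K$-ideal attached to $X$ itself, that Theorem 19 pins down as Fr\'echet. Once these identifications are secured, the conclusion follows immediately.
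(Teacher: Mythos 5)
Your proposal is correct and follows exactly the route the paper intends: the paper derives this corollary by combining Proposition 18 and Theorem 19 (the direct-sum $K$-ideal $I_{\mathcal{F}^*}$ is the Fr\'echet ideal $[\kappa]^{<\kappa}$) with Lemma 20 (the Fr\'echet ideal on a regular uncountable cardinal is not precipitous). Your added care in reading the corollary as a statement about $I_{\mathcal{F}^*}$ rather than an arbitrary $K$-ideal on $X$, and in checking regularity and uncountability of $\kappa$, only makes explicit what the paper leaves implicit.
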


Let $B(2^\kappa)$ be a Baire space, where $\kappa$ is a cardinal. In \cite{FK} the authors proved the following theorem.

\begin{theorem}[\cite{FK}]
	Assume that $J$ is an $\omega_1$-complete ultrafilter on $\kappa$. Then $B(2^\kappa)$ has a Kuratowski partition of cardinality $\kappa$.
\end{theorem}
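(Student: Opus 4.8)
The plan is to realize $B(2^\kappa)$ as the space $X(I)$ for a suitable precipitous ideal $I$ and then invoke the theorem asserting that a precipitous ideal yields a Kuratowski partition of $X(I)$. Put $I = J^\ast = \{A \subseteq \kappa \colon \kappa \setminus A \in J\}$, the ideal dual to $J$; since $J$ is an ultrafilter this $I$ is a maximal ideal with $I^+ = J$, and since $J$ is $\omega_1$-complete, so is $I$. First I would check that under these identifications $X(I)$ is literally the full product ${}^\omega J$. Indeed, for $x \in {}^\omega(I^+) = {}^\omega J$ the requirement $\bigcap_{m<n} x(m) \in I^+$ holds because $J$ is a filter, while $\bigcap_{n\in\omega} x(n) \in J$ (and hence is non-empty) because $J$ is $\omega_1$-complete; thus both side-conditions in the definition of $X(I)$ are automatic and $X(I) = {}^\omega J$. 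As $J$ carries the discrete topology and $|J| = 2^\kappa$ (an ultrafilter meets each complementary pair $\{A, \kappa \setminus A\}$ in exactly one point, and there are $2^\kappa$ such pairs), the space ${}^\omega J$ is homeomorphic to the Baire space $B(2^\kappa)$.

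Next I would verify that $I$ is precipitous, so that the cited theorem applies. Here the maximality of $I$ collapses the combinatorics: if $A,B$ are distinct blocks of an $I$-partition of a positive set $S$, then $A,B \in J$, whence $A \cap B \in J \subseteq I^+$, contradicting $A \cap B \in I$; so every $I$-partition of $S$ has a single block modulo $I$. Consequently a descending chain $W_0 \geq W_1 \geq \cdots$ of $I$-partitions of $S$ reduces to a descending chain $S = S_0 \supseteq S_1 \supseteq \cdots$ of $J$-positive sets, and $\bigcap_n S_n \in J$ is non-empty by $\omega_1$-completeness. Hence the precipitousness condition is met (consistently with the proposition that $X(I)$ is Baire iff $I$ is precipitous, since $X(I) = {}^\omega J$ is completely metrizable). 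Applying the theorem, we obtain a Kuratowski partition of $X(I) \cong B(2^\kappa)$.

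For the cardinality statement I would exhibit the blocks explicitly. Define $\Phi \colon {}^\omega J \to \kappa$ by $\Phi(x) = \min \bigcap_{n} x(n)$; this is well defined precisely because $\omega_1$-completeness makes $\bigcap_n x(n)$ a non-empty member of $J$. The fibres $F_\alpha = \Phi^{-1}(\alpha)$ form a partition of cardinality at most $\kappa$, and each $F_\alpha$ is meager, being contained in the closed set $\{x \colon \forall n\ \alpha \in x(n)\} = \bigcap_n \{x \colon \alpha \in x(n)\}$, which is nowhere dense because $J$ is non-principal (given any finite condition one may pick a later coordinate in $J$ omitting $\alpha$). I would then identify this as the partition delivered by the construction above, giving exactly $\kappa$ non-empty blocks.

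The step I expect to be the real obstacle is showing that $\bigcup_{\alpha \in A} F_\alpha = \{x \colon \min\bigcap_n x(n) \in A\}$ has the Baire property for \emph{every} $A \subseteq \kappa$ — i.e.\ that $\Phi$ induces a Kuratowski partition and not merely a partition into meager sets. When $A$ or $\kappa \setminus A$ has size below the additivity of the meager ideal this is immediate, but for a general $A$ one cannot argue coordinatewise, and this is exactly where $\omega_1$-completeness (through precipitousness of $I$) is indispensable. Concretely I would reduce the claim to the localization criterion for the Baire property — for each basic clopen $[s]$ produce a smaller basic clopen $[t] \subseteq [s]$ on which $\Phi^{-1}(A)$ is meager or comeager — and establish the required dichotomy by the fusion/refinement argument underlying the precipitous-ideal theorem. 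This is the technical heart of the result, and it is where I would concentrate the work.
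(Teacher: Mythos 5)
Your set-up is sound as far as it goes: with $I=J^{\ast}$ one indeed has $I^{+}=J$, both side conditions in the definition of $X(I)$ become automatic from $\omega_1$-completeness, $|J|=2^{\kappa}$, hence $X(I)={}^{\omega}J\cong B(2^{\kappa})$; and the observation that every $I$-partition of a positive set collapses to a single block, so that $I$ is precipitous, is correct. (For the record, the paper states this theorem as a quotation from Frankiewicz--Kunen and gives no proof of its own, so there is nothing to compare your route against.) The genuine gap is the one you flag yourself in your final paragraph: the whole content of the theorem is that $\bigcup_{\alpha\in A}F_{\alpha}=\Phi^{-1}(A)$ has the Baire property for \emph{every} $A\subseteq\kappa$, and you only announce that this ``is where I would concentrate the work.'' Partitioning ${}^{\omega}J$ into the fibres of $\Phi(x)=\min\bigcap_{n}x(n)$ and checking each fibre is nowhere dense is the easy half; any Baire space admits partitions into nowhere dense sets, and none of that distinguishes a Kuratowski partition. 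The required dichotomy --- for each basic clopen $[s]$ some $[t]\subseteq[s]$ on which $\Phi^{-1}(A)$ is meager or comeager --- does not follow formally from ``precipitousness of $I$'' as you suggest; it needs its own Banach--Mazur/fusion argument exploiting the $\omega_1$-completeness of $J$ (roughly, that the collection $\{A\subseteq\kappa\colon\Phi^{-1}(A)\text{ is comeager in }[t]\}$ is, for suitable $t$, an ultrafilter). That argument is the theorem, and it is absent.

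Two further points. You cannot discharge the existence part by citing the proposition that a precipitous ideal on a regular cardinal yields a Kuratowski partition of $X(I)$: as stated in this paper that result presupposes a $\kappa$-complete ideal containing singletons on a \emph{regular} $\kappa$, whereas $J^{\ast}$ is only guaranteed $\omega_1$-complete ($J$ could live on a non-regular $\kappa$ above the first measurable), and even where it applies it says nothing about the cardinality of the partition, so you are thrown back on the explicit construction in any case. Finally, the count of non-empty fibres need not be exactly $\kappa$: $F_{\alpha}\neq\emptyset$ precisely when $[\alpha,\kappa)\in J$, and if $J$ concentrates on a set of cardinality less than $\kappa$ this initial segment is proper, so the ``cardinality $\kappa$'' clause needs either a reindexing or a different choice of $\Phi$; and non-principality of $J$, which your nowhere-density argument uses, should be made an explicit hypothesis.
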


\noindent
By Theorem 3.3. and Theorem 3.4. in \cite{FK} the existence of a Kuratowski partition of an arbitrary space is equiconsistent with the existence of a measurable cardinal.
In the next theorem it  is shown that  reducing an ideal up to the Frechet ideal can  be obtained by enlarging the space, as a direct sum. On the other hand, enlarging an ideal depends on localization property, it means reducing the space.  The idea of getting measurable cardinal is a result of localization (see \cite{FK}).

\begin{theorem}[\cite{JW}]
	Let $\kappa$ be a measurable cardinal. Then each $\kappa$-complete ideal $I$ on $\kappa$ can be represented by some $K$-ideal, (i.e. for each $\kappa$-complete ideal $I$ on $\kappa$ there exists a space having a Kuratowski partition $\mathcal{F}$ of cardinality $\kappa$ such that $I$ is of the form $I_\mathcal{F}$).
\end{theorem}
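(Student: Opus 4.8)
The plan is to reduce the statement to the following purely topological task: given the $\kappa$-complete ideal $I$, construct a Baire space $X$ together with a partition $\mathcal{F}=\{F_\alpha\colon\alpha<\kappa\}$ of $X$ into meager sets such that, for every $A\subseteq\kappa$, the union $\bigcup_{\alpha\in A}F_\alpha$ has the Baire property and is meager precisely when $A\in I$. The first clause says $\mathcal{F}$ is a Kuratowski partition, and the second says $I_\mathcal{F}=I$, which is exactly the assertion. Observe that $[\kappa]^{<\kappa}\subseteq I$ is automatic from $\kappa$-completeness together with $I$ containing singletons, so the Fr\'echet ideal is the smallest candidate and $I$ is to be obtained from it by \emph{enlarging}.

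Following the two mechanisms recalled before the statement, measurability first supplies the base Baire space. Fix a $\kappa$-complete ultrafilter $W$ on $\kappa$; its dual ideal $I_W$ is precipitous, since the generic ultrapower of the quotient forcing is the genuine, well-founded ultrapower $\mathrm{Ult}(V,W)$. Hence by Proposition 15 the metric space $X(I_W)$ is Baire, and by Theorem 16 it carries a Kuratowski partition. A disjoint union of Baire spaces is again Baire (a set meager in the union has meager trace on each summand), so the disjoint union of copies of $X(I_W)$ over all permutations of $\kappa$ is a Baire space which, by Proposition 18 and Theorem 19, realizes the Fr\'echet ideal. This is the base case $I_\mathcal{F}=[\kappa]^{<\kappa}$.

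From this Fr\'echet base I would enlarge the K-ideal up to $I$ by localization, organized through the fact that disjoint unions realize \emph{intersections} of K-ideals: if each ideal $J$ in some family is the K-ideal of a Kuratowski partition of a Baire space $X_J$, the partitions being indexed compatibly by $\kappa$, then $\bigoplus_J X_J$ is Baire with K-ideal $\bigcap_J J$, because the $\alpha$-th union is meager in the sum exactly when it is meager in every summand. It therefore suffices to write $I=\bigcap_{A\in I^+}J_A$, where each $J_A\supseteq I$ is precipitous with $A\in J_A^+$: realizing each $J_A$ on the Baire space $X(J_A)$ and taking the disjoint union over $A\in I^+$ yields $I$ itself, the inclusion $I\subseteq\bigcap_A J_A$ being immediate and the condition $A\in J_A^+$ giving the reverse inclusion. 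For this to work I also need the natural sharpening of Theorem 16, that for precipitous $J$ the canonical partition of $X(J)$ by the limit point has K-ideal exactly $J$, which one reads off from the construction of $X(J)$ in \cite{FK}.

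The main obstacle is producing the precipitous extensions $J_A$, and this is the essential use of the hypothesis. After restricting attention to $A$ (via a bijection $A\cong\kappa$), the task is the set-theoretic lemma that every $\kappa$-complete proper ideal on $\kappa$ extends to a precipitous ideal; measurability enters here to furnish well-founded generic ultrapowers, so that the extension can be built from the normal measure and its elementary embedding rather than assumed. The delicate point is simultaneous two-sided control: enlarging to $J_A$ must make $\bigcup_{\alpha\in B}F_\alpha$ meager for all $B\in I$ yet keep it non-meager for the chosen $A\in I^+$, and the resulting disjoint union must stay Baire so that $I_\mathcal{F}$ does not collapse to all of $\mathcal{P}(\kappa)$. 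I expect the verification of this extension lemma from measurability to be the crux of the argument, with the topological bookkeeping above being routine once it is in hand.
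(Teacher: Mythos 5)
The paper states this theorem without proof (it is quoted from \cite{JW}), so your proposal can only be judged on its own merits. Your topological scaffolding is sound and matches the mechanisms the paper itself advertises around the statement: the direct sum of Baire spaces is Baire, a compatibly indexed sum of Kuratowski partitions is a Kuratowski partition whose $K$-ideal is the intersection of the summands' $K$-ideals, and the canonical partition of $X(J)$ by $F_\alpha=\{x\colon \min\bigcap_n x(n)=\alpha\}$ does have $K$-ideal exactly $J$ when $X(J)$ is Baire (meagerness of $\bigcup_{\alpha\in A}F_\alpha$ corresponds precisely to $A\in J$, since for $A\in J^+$ that union contains the nonempty open set $\{x\colon x(0)=A\}$). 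The reduction of the theorem to the statement ``for every $A\in I^+$ there is a precipitous $\kappa$-complete $J_A\supseteq I$ with $A\in J_A^+$'' is therefore correct.

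The genuine gap is that this last statement --- which you yourself identify as the crux --- is neither proved nor a known consequence of measurability, and your sketch gives no mechanism for producing the $J_A$. Note that after restricting to $A$ it amounts to: every $\kappa$-complete proper ideal on a measurable $\kappa$ extends to a proper precipitous $\kappa$-complete ideal. If you try to realize the $J_A$ as prime ideals (duals of $\kappa$-complete ultrafilters), which is the only case your base construction actually delivers, you need the filter dual to $I$ to extend to a $\kappa$-complete ultrafilter disjoint from $I$ and containing $A$; the assertion that every $\kappa$-complete filter on $\kappa$ so extends is a compactness property of $\kappa$ strictly stronger than measurability, so it cannot be invoked here. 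The non-prime precipitous version is not a standard fact either, and ``measurability furnishes well-founded generic ultrapowers'' does not by itself produce a precipitous extension of an arbitrary $\kappa$-complete ideal. Until this extension lemma is either proved or replaced by a construction that realizes a non-precipitous ideal directly (e.g.\ by building the partition on a different space than $X(J)$), the argument does not go through.
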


\noindent
If $\kappa$ is a nonmeasurable cardinal but there exists a Kuratowski partition of cardinality $\kappa$ of a space $X$ then one can obtain each $\kappa$-complete ideal $K$ such that $J \subseteq K \subseteq I$, where $J$ is a Frech\'et ideal and $I$ is an ideal from previous theorem.
Thus, the existence of a Kuratowski partition leads to the statement that there exists a precipitous ideal, (see \cite{FJ}).
\\
At the presence of the previous results, in \cite{FJ2} it is shown the next theorem.

\begin{theorem}[\cite{FJ2}]
	Let $X$ be a metric Baire space with a Kuratowski partition $\mathcal{F}$. Then there exists an open set $U \subset X$ such that a $K$-ideal $I_{\mathcal{F}\cap U}$ is precipitous.
\end{theorem}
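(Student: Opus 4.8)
The plan is to analyse the $K$-ideal $I=I_{\mathcal F}$ through the category algebra of $X$. Since $\mathcal F$ is a Kuratowski partition, for every $A\subseteq\kappa$ the union $\bigcup_{\alpha\in A}F_\alpha$ has the Baire property, hence equals a regular open set $e(A)\in RO(X)$ modulo a meager set. First I would verify that $e\colon\mathcal P(\kappa)\to RO(X)$ is a Boolean homomorphism: it respects unions because the $F_\alpha$ are pairwise disjoint, and it respects complements because $\mathcal F$ covers $X$. Its kernel is exactly $I$, so $e$ factors through a Boolean embedding $\bar e\colon\mathcal P(\kappa)/I\hookrightarrow RO(X)$. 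Passing to a regular open $U$ replaces $RO(X)$ by $RO(U)=RO(X)\restriction U$ and $\bar e$ by $A\mapsto e(A)\wedge U$, and one checks that $I_{\mathcal F\cap U}=\{A\subseteq\kappa:e(A)\wedge U=0\}$. Thus the whole statement reduces to finding a nonzero $U\in RO(X)$ below which $\bar e$ becomes a \emph{regular} (dense image) embedding, together with the implication that such regularity yields precipitousness.

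Second I would establish the implication regularity-below-$U$ $\Rightarrow$ $I_{\mathcal F\cap U}$ precipitous, and this is where the Baire property of $X$ does the work. Given a descending sequence $W_0\ge W_1\ge\dots$ of $I_{\mathcal F\cap U}$-partitions of a positive set $S$, each $W_n$ is a maximal antichain below $[S]$, so in the completion its supremum is $[S]$; if the image of $\bar e$ is dense below $U$, then $\bar e$ preserves this supremum and $\bigcup_{A\in W_n}e(A)$ is dense in $e(S)$. Because the cells $\{e(A):A\in W_n\}$ are pairwise disjoint regular open sets, the remainder $N_n=e(S)\setminus\bigcup_{A\in W_n}e(A)$ is nowhere dense, and the level-$n$ defect $P_n=\bigcup_{A\in W_n}\bigl(e(A)\setminus\bigcup_{\alpha\in A}F_\alpha\bigr)$ is meager as well (a disjointly supported union of locally nowhere dense sets). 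Since $X$ is Baire and $e(S)$ is nonempty open, I can choose $x\in e(S)$ outside the meager set $\bigcup_n(N_n\cup P_n)$. At each level there is then a unique $A_n\in W_n$ with $x\in e(A_n)$; refinement forces $A_{n+1}\subseteq A_n$, and $x\notin P_n$ guarantees $x\in\bigcup_{\alpha\in A_n}F_\alpha$, so the unique index $\alpha(x)$ with $x\in F_{\alpha(x)}$ lies in every $A_n$. Hence $\alpha(x)\in\bigcap_nA_n$, the required branch.

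The main obstacle is the third step: producing a nonzero $U$ on which $\bar e$ is regular. Here I would take a maximal antichain $\mathcal W\subseteq RO(X)$ of \emph{singular} cells, where $V$ is singular if no nonzero $e(A)$ lies below $V$, and set $U=\neg\bigvee\mathcal W$; the task is to show $U\ne0$. Below a singular $V$ the union $\bigcup\{F_\alpha:F_\alpha\subseteq V\}$ is meager, and as the members of $\mathcal W$ are pairwise disjoint these meager traces assemble, by the locality of nowhere density in a metric space, into a single meager set $G=\bigcup_{V\in\mathcal W}\bigcup\{F_\alpha:F_\alpha\subseteq V\}$. If $U=0$, so $\bigcup\mathcal W$ is dense, one obtains a comeager set of points each of whose cell leaks out of every singular neighbourhood; ruling this configuration out is exactly where the $\kappa$-completeness of $I_{\mathcal F}$ and the measurable-cardinal analysis of \cite{FK} must enter to force a contradiction, and I expect this to be the genuinely hard part. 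Once $U\ne0$ is secured, the first two steps give that $I_{\mathcal F\cap U}$ is precipitous.

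As a cross-check, the same localization can be read off through the space $X(I)$ of the criterion of \cite{FK} ($X(I)$ is Baire iff $I$ is precipitous). For a decreasing neighbourhood base $B_n(x)$ the assignment $x\mapsto\langle\{\alpha:F_\alpha\cap B_n(x)\ne\emptyset\}\rangle_{n}$ is a map $\Phi\colon X\to X(I)$ whose coordinates are $I$-positive, decreasing, with nonempty total intersection, and which is continuous when $B_n$ is drawn from a $\sigma$-discrete base. Localizing $X$ to $U$ so that $\Phi$ is open onto a comeager part of $X(I_{\mathcal F\cap U})$ transports the Baire property of $U$ to $X(I_{\mathcal F\cap U})$, whence precipitousness follows from that criterion. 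The obstacle is identical: one must discard the "ideal" branches of $X(I)$ that are not of the form $\Phi(x)$, which is precisely the nonemptiness of $U$ above.
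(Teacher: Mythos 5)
Your proposal has a genuine gap at exactly the point you flag yourself: the production of a nonzero $U$ below which the map $\bar e\colon \mathcal P(\kappa)/I_{\mathcal F\cap U}\to RO(U)$ is regular (preserves maximal antichains). Everything else is conditional on that step, and you offer no argument for it beyond defining the antichain $\mathcal W$ of singular cells and expressing the hope that ``the measurable-cardinal analysis of \cite{FK} must enter to force a contradiction.'' That is not a proof sketch of the hard step; it is a restatement of the problem. Worse, the target you aim for is stronger than what is needed and is in direct tension with results the paper itself quotes: a dense (or even complete) embedding of $\mathcal P(\kappa)/I_{\mathcal F\cap U}$ into the category algebra $RO(U)$ of a metric space would make the quotient of a $\kappa$-complete ideal forcing-equivalent to (a complete subalgebra of) a Cohen-type algebra, which is precisely the kind of configuration excluded by the Gitik--Shelah analysis \cite{GS} underlying Theorem~8 of this paper. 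So the localization in the form you propose is not merely unproven; there is good reason to suspect it is unattainable, and the argument must be organized differently. Your step~2 (regularity below $U$ implies precipitousness, via a Baire-category choice of a point avoiding the nowhere dense remainders $N_n$ and the meager defects $P_n$, then reading off $\alpha(x)\in\bigcap_n A_n$) is correct as far as it goes, but it only ever uses preservation of the countably many antichains $W_n$ at hand, so the global density you demand in step~3 is both too strong and unobtained.

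The paper's proof avoids this Boolean-algebraic reduction entirely. It works with the functional characterization of (non-)precipitousness (Jech, Lemma~35.8--35.9): it fixes the family $FN(\kappa)$ of functions constant on each $F_\alpha\cap U$ for a dense disjoint open family $\mathcal U_f$, assumes that $I_{\mathcal F\cap U}$ fails to be precipitous for \emph{every} such $U$, extracts for each $U$ a strictly descending sequence of functionals witnessing this, glues these witnesses across the disjoint family $\mathcal U_f$ into functionals defined on a positive set of the whole space, and then applies the Baire Category Theorem once to find a single point $x$ realizing an infinite strictly descending sequence of ordinal values $h_0^f(x)>h_1^f(x)>\cdots$, contradicting well-foundedness. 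The Baire-category finale is close in spirit to your step~2, but the localization is achieved by this gluing-and-contradiction scheme rather than by exhibiting a regular embedding; that is the idea missing from your proposal.
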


\begin{proof}
	Let $\kappa = \min \{|\mathcal{F}| \colon \mathcal{F} \textrm{ is a Kuratowski partition of } X\}.$
	Let $\mathcal{F} = \{F_\alpha \colon \alpha < \kappa\}$ be a fixed Kuratowski partition of $X$ and let $I_{\mathcal{F}}$ be a $K$-ideal for $\mathcal{F}$. Consider 
	$FN(\kappa) = \{f \in {^X}\kappa \colon \exists_{\mathcal{U}_f} \textrm{ family of open disjoint sets which is }  \textrm{dense in } X \textrm{ and } \\ \forall_{\alpha \in \kappa} \forall_{U \in \mathcal{U}_f} f \textrm{ is constant on }  F_\alpha\cap U\}$, (compare \cite[proof of Theorem 3.3]{FK}).
	For each $f \in FN(\kappa)$ and each $U \in \mathcal{U}_f$ consider a $K-$ideal $$I_{\mathcal{F}\cap U} = \{A \subset \kappa \colon \bigcup_{\alpha \in A} (F_\alpha \cap U) \textrm{ is meager, } F_\alpha \in \mathcal{F}\}.$$ We claim that there exists $f \in FN(\kappa)$ and $U \in \mathcal{U}_f$ such that $I_{\mathcal{F}\cap U}$ is precipitous. 
	
	Suppose not, then by  Lemma 2.1, for all $f \in FN(\kappa)$ and all $U \in \mathcal{U}_f$ there are sequences of functionals  
	$F^{f}_{0}(U) > F^{f}_{1}(U)> ... $, on $U$ such that $F^{f}_{i}(U) \subseteq FN(\kappa)$, $i = 0, 1, ...$.
	
	Fix $f \in FN(\kappa)$ and $U\in \mathcal{U}_f$. 
	Let $W^{f}_{i}(U)$ be  $I_{\mathcal{F}\cap U}$-partitions for $F^{f}_{i}(U)$,  $i = 0, 1, ...\ $.
	Then  $W^{f}_{0}(U) \geq W^{f}_{1}(U)\geq ...\ $.
	Let $X^{f}_{i}(U) \in W^{f}_{i}(U)$ such that
	$X^{f}_{0}(U) \supseteq X^{f}_{1}(U)\supseteq ...\ $.
	Since $I_{\mathcal{F}\cap U}$ is not precipitous,  $\bigcap_{i=0}^{\infty} X^{f}_{i}(U) = \emptyset$.
	
	Now for each $i = 0, 1, ...$ consider $F^{f}_{i}: =\bigcup_{U \in \mathcal{U}_f} F^{f}_{i}(U).$
	Hence $F^{f}_{i} \subseteq FN(\kappa)$ and 	$F^{f}_{0} > F^{f}_{1} >...$
	is the sequence of functionals on $S$. 
	Let  $W^{f}_{i} = \bigcup_{U \in \mathcal{U}_f} W^{f}_{i}(U)$. Then $W^{f}_{i}$ are $I_{\mathcal{F}}$-partitions for $F^{f}_{i}$, $i = 0, 1, ...$ and $W^{f}_{0} \geq W^{f}_{1} \geq ...\ $.
	Let $X^{f}_{i} = \bigcup_{U \in \mathcal{U}_f}X^{f}_{i}(U)$ be elements of $W^{f}_{i}$, $i = 0, 1, ...$  with $X^{f}_{0} \supseteq X^{f}_{1} \supseteq ...\ $. Let $h^{f}_{i} \in F^{f}_{i}$ be functions with domains $X^{f}_{i}$, $i = 0, 1, ...\ $. By Baire Category Theorem there exists $x \in \bigcap_{i=0}^{\infty} X^f_i \subseteq S$ such that $h^{f}_{0}(x) > h^{f}_{1}(x) >...\ $. A contradiction with the well-foundness of the sequence.
\end{proof}

In the light of consideration on Kuratowski partitions, the newest result of this topic given in \cite{FJ3} is the following theorem.

\begin{theorem}[\cite{FJ3}]
	If there is a metric Baire space which admits a Kuratowski partition then there is a measurable cardinal.
\end{theorem}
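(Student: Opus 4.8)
The plan is to extract from the given partition a countably complete nonprincipal ultrafilter on a cardinal; by the classical theorem of Ulam (see \cite{TJ}) the mere existence of such an ultrafilter yields a measurable cardinal. Let $\kappa$ be the least cardinality of a Kuratowski partition of any metric Baire space, and fix a metric Baire space $X$ together with a Kuratowski partition $\mathcal{F}=\{F_\alpha\colon\alpha<\kappa\}$ realizing this minimum. First I would dispose of small cases: since the meager sets form a $\sigma$-ideal and $X$, being Baire, is non-meager in itself, no countable partition into meager sets can be Kuratowski, so $\kappa$ is uncountable, and a routine coalescing argument shows $\kappa$ is regular. Next I would record the algebra. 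The assignment $A\mapsto\bigcup_{\alpha\in A}F_\alpha$ induces a Boolean homomorphism $h$ from $\mathcal{P}(\kappa)$ into the category algebra $\mathrm{Baire}(X)/\mathrm{meager}$, whose kernel is exactly the $K$-ideal $I_{\mathcal F}$: disjointness of the pieces sends complements to complements and disjoint unions to joins, and the $\sigma$-ideal property of the meager sets sends countable unions to countable joins. Hence $I_{\mathcal F}$ is a proper (as $\bigcup\mathcal F=X$ is non-meager), $\sigma$-complete, nonprincipal ideal on $\kappa$ containing all singletons.

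With this in hand the deduction is short once one further fact is available. By \cite{FJ2} there is an open $U\subseteq X$ for which the localized ideal $I_{\mathcal F\cap U}$ is precipitous; replacing $X$ by $U$ and $\mathcal F$ by $\{F_\alpha\cap U\colon\alpha<\kappa\}$ --- still a Kuratowski partition of a metric Baire space with the same index set --- I may assume $I_{\mathcal F}$ itself is precipitous while keeping $\sigma$-completeness and the minimality of $\kappa$. The decisive point is to show that $I_{\mathcal F}$ is \emph{prime}, i.e.\ that for every $A\subseteq\kappa$ either $\bigcup_{\alpha\in A}F_\alpha$ or $\bigcup_{\alpha\notin A}F_\alpha$ is meager. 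Granting primeness, the dual of $I_{\mathcal F}$ is a $\sigma$-complete nonprincipal ultrafilter on $\kappa$, and Ulam's theorem produces a measurable cardinal, which is exactly the conclusion.

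To establish primeness I would argue from the minimality of $\kappa$. Suppose some $A$ splits $\kappa$ so that both $\bigcup_{\alpha\in A}F_\alpha$ and $\bigcup_{\alpha\notin A}F_\alpha$ are non-meager. By the Baire property each union agrees with an open set modulo a meager set, so there are disjoint nonempty open sets $W,W'$ on which, respectively, $\bigcup_{\alpha\in A}F_\alpha$ and $\bigcup_{\alpha\notin A}F_\alpha$ are comeager; in particular the complementary half $\bigcup_{\alpha\notin A}(F_\alpha\cap W)$ is meager in $W$. Coalescing that meager half into a single block $L$ gives the family $\{F_\alpha\cap W\colon\alpha\in A\}\cup\{L\}$, which one checks is again a Kuratowski partition of the metric Baire space $W$ (adjoining a meager block preserves the Baire property of every subunion, and each $\bigcup_{\alpha\in B}(F_\alpha\cap W)$ for $B\subseteq A$ is the restriction of a Kuratowski subunion to an open set). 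When the coalesced side is genuinely small this partition has cardinality $<\kappa$, contradicting the minimality of $\kappa$ and ruling out the split.

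The hard part will be the balanced case, where both $A$ and its complement meet $\kappa$ in a set of full cardinality, so neither coalesced partition is visibly smaller. Here I would feed the split into the refinement machinery behind \cite{FJ2}: iterating splits over $W$ produces a descending chain of $I_{\mathcal F}$-partitions, and the precipitousness of $I_{\mathcal F}$ forces a compatible descending thread with nonempty intersection, while the Baire Category Theorem supplies a point of $X$ lying in that thread. As in the proof of the preceding theorem, this point would witness an infinite strictly decreasing sequence of ordinals assigned by the associated locally constant functions in $FN(\kappa)$, contradicting well-foundedness; thus the balanced split cannot persist. I expect precisely this marriage of the minimality of $\kappa$ with the well-foundedness argument of \cite{FJ2} to be the crux, since precipitousness alone never forces an ideal to be prime (the nonstationary ideal can be precipitous yet splits freely), so neither minimality nor precipitousness by itself will close the gap between a precipitous $K$-ideal and a genuine measure.
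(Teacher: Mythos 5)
The paper states this theorem without proof (it is quoted from \cite{FJ3}, listed as ``in preparation''), so there is no in-paper argument to compare yours against; I can only assess the proposal on its own terms. Your preliminary reductions are reasonable: taking $\kappa$ minimal, checking that $\kappa$ is uncountable and regular, observing that $A\mapsto\bigcup_{\alpha\in A}F_\alpha$ induces a homomorphism into the category algebra with kernel $I_{\mathcal F}$, deducing from minimality that $I_{\mathcal F}$ contains $[\kappa]^{<\kappa}$ and is $\kappa$-complete, and invoking the preceding theorem (from \cite{FJ2}) to localize to an open $U$ on which the $K$-ideal is precipitous. The target --- a $\kappa$-complete nonprincipal prime ideal whose dual ultrafilter witnesses measurability --- is also the right one.

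But the decisive step, primeness of the localized ideal in what you call the balanced case, is not proved; it is only announced. The mechanism you sketch does not work as described: a single split of $\kappa$ into two $I_{\mathcal F}$-positive halves does not by itself generate an infinite strictly decreasing sequence of ordinal values of functions in $FN(\kappa)$, so there is no well-foundedness to violate; and precipitousness is the wrong lever for a contradiction, since it \emph{asserts} that descending chains of $I$-partitions admit threads with nonempty intersection --- it is the \emph{negation} of precipitousness that feeds the well-foundedness argument in the proof of the preceding theorem. As you yourself concede, precipitous ideals need not be prime, and minimality of $\kappa$ only kills splits in which one side uses fewer than $\kappa$ indices. Closing this gap is the entire content of the theorem; the natural missing ingredient is a Gitik--Shelah type analysis of the quotient (compare the theorem quoted from \cite{FJ1} that $P(\kappa)/I_\kappa$ is not isomorphic to $LM/\triangle$, and its category analogue): the $\kappa$-complete embedding of $P(\kappa)/I_{\mathcal F\cap U}$ into the category algebra of a metric space should be shown to force an atom, and an atom is exactly a set below which the ideal is prime. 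Without an argument of that kind the proposal does not yet prove the statement.
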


{\sc Ryszard Frankiewicz}
\\
Institute of Mathematics, Polish Academy of Sciences, Warsaw, Poland
\\
{\sl e-mail: rf@impan.pl}
\\

{\sc Joanna Jureczko}
\\
Cardinal Stefan Wyszy\'nski University
\\
Faculty of Mathematics and Natural Sciences, Warsaw, Poland
\\
{\sl e-mail: j.jureczko@uksw.edu.pl}

\end{document}